\newtheorem{theorem}{Theorem}[section]
\newtheorem{lemma}[theorem]{Lemma}
\newtheorem{proposition}[theorem]{Proposition}
\theoremstyle{definition}
\newtheorem{definition}[theorem]{Definition}
\newtheorem{example}[theorem]{Example}
\theoremstyle{remark}
\newtheorem{remark}[theorem]{Remark}
\numberwithin{equation}{section}
\begin{document}
\setcounter{page}{1}

\title[ The index of elliptic operators on compact Lie groups]{ On the  index of pseudo-differential  operators on compact Lie groups}

\author[D. Cardona]{Duv\'an Cardona}
\address{
  Duv\'an Cardona:
  \endgraf
  Department of Mathematics  
  \endgraf
  Pontificia Universidad Javeriana.
  \endgraf
  Bogot\'a
  \endgraf
  Colombia
  \endgraf
  {\it E-mail address} {\rm duvanc306@gmail.com;
cardonaduvan@javeriana.edu.co}
  }


\dedicatory{}

\subjclass[2010]{19K56; Secondary 58J20, 43A65.}

\keywords{ Index Theorem, Elliptic operator, Compact Lie Group, Representation theory}

\begin{abstract}
 In this note we study  the analytical index of  pseudo-differential operators by using the notion of (infinite dimensional) operator-valued symbols (in the sense of Ruzhansky and Turunen). Our main tools will be the McKean-Singer index formula together with the operator-valued functional calculus developed here.\\
\textbf{MSC 2010.} Primary { 19K56; Secondary 58J20, 43A65}.
\end{abstract} \maketitle

\tableofcontents

\section{Introduction}

In this note we investigate index formulae  for pseudo-differential operators on compact Lie groups by using the notion of operator-valued symbol.

A pseudo-differential operator $A:C^\infty_{0}(\mathbb{R}^n)\rightarrow C^\infty(\mathbb{R}^n), $ is an integral operator  defined by
\begin{equation}
Af(x)=\int_{\mathbb{R}^n} e^{ix\xi}\sigma_A(x,\xi)\hat{f}(\xi)\, d\xi,
\end{equation}
and associated to a smooth function $\sigma_A(x,\xi)$ -- called the symbol of $A$ --  satisfying some bounded conditions on its derivatives (see \cite{Hor2}).  Here $\hat{f}$ denotes the euclidean Fourier transform of the function $f$. For every $m\in \mathbb{R}$ and every open set $U\subset \mathbb{R}^n$, the H\"ormander class of symbols of order $m,$ $S^{m}(U\times \mathbb{R}^n)$, (for a detailed description see \cite{Hor2}),  is defined  by functions satisfying the usual estimates
\begin{equation}\label{horm}
|\partial_{x}^{\alpha}\partial_{\xi}^{\beta}\sigma_A(x,\xi)|\leq C_{\alpha,\beta}\langle \xi\rangle^{m-|\beta|},
\end{equation}
for all $(x,\xi) \in T^*U \cong U\times \mathbb{R}^{n}$ and $\alpha,\beta\in \mathbb{N}^n$. These classes  initially defined on open sets of  $\mathbb{R}^n,$ can be defined on smooth manifolds by using charts. On a manifold $M$ (orientable and without boundary), the corresponding operators associated to the H\"ormander classes of order $m$ will be denoted by $\Psi^m(M).$ In our case we are interested when $M=G$ is a compact Lie group.

It is well known that  every elliptic pseudo-differential operator $D$ on a closed manifold $M$ (i.e a compact manifold without boundary)  acting in smooth functions, has kernel and cokernel of finite dimension and, in terms of the $L^2$-theory of Fredholm operators, so it
can be associated an integer number, called the index of $D$ and, defined by
\begin{equation}
\textnormal{ind}(D):=\dim \textnormal{Kernel}(D)-\dim \textnormal{Cokernel}(D).
\end{equation}

Now, let $G$ be a compact Lie group, $\mathscr{D}(G)=C^\infty(G)$ be the space of smooth functions on $G$ endowed with the usual Frechet structure and $\mathscr{D}'(G)$ be the space of Schwartz distributions.  Let us consider a continuous operator $A:C^\infty(G)\rightarrow C^\infty(G) $ and the right convolution operator $r(f)$ on $G$ (defined by $r(f)(g)=g\ast f,$ where $f\in \mathscr{D}'(G),$  $g\in C^\infty(G)$). If $\pi_R$ is the  right regular representation on $G$ (defined  by $\pi_R(x)f(y)=f(yx),$ $x,y\in G, $  $f\in C^\infty(G)$) then, Ruzhansky and Turunen in \cite{Ruz} showed that
\begin{eqnarray}
Af(x)=\textnormal{tr}(\sigma_A(x)r(f)\pi_R(x)),
\end{eqnarray}
for some unique (operator-valued) symbol 
$\sigma_A:G\rightarrow\mathscr{B}(C^\infty(G))$ from $G$ into the space of continuous linear operators on $C^\infty(G).$ The pseudo-differential term is justified because $r(f),$ is sometimes, called the right global Fourier transform of $f.$

 In relation with our work, a characterization for the H\"ormander classes $\Psi^m(G)$ of pseudo-differential operators on arbitrary compact Lie groups,  in terms of operator-valued symbols, was proved by M. Taylor (see Remark 10.11.22 of \cite{Ruz}). 
If $A:C^\infty(G)\rightarrow C^\infty(G) $ extends to  a Fredholm operator on $L^2(G) $ (a necessary and sufficient condition for the Fredholmness of $A$ is the ellipticity condition), in this paper we compute the index of $A$  in terms of its operator-valued symbol $\sigma_A:G\rightarrow\mathscr{B}(C^\infty(G))$ and the operator-valued symbol of its formal adjoint $A^*,$ $\sigma_{A^*}:G\rightarrow\mathscr{B}(C^\infty(G)).$  Complete references on index theory are the books \cite{BB,Fedosovbook} and \cite{Fedosovbook2}.

The main result in  index theory is the Atiyah-Singer index theorem. This theorem was conjectured by I. M. Gelfand and several of its versions or extensions  can be found in the works of M. Atiyah, I. Singer and  R. Bott,  \cite{atiyabott1,atiyabott2,AS,AS1,AS2,AS3,AS44,AS4,AS5,BB,Bott} for several classes of manifolds (and non-commutative structures). It was proved in these references  that the index of an elliptic operator can be written  in topological terms depending only on the homotopy class of its principal symbol. For a general elliptic pseudo-differential operator $D$, acting on smooth sections of a closed manifold $M$, the Atiyah-Singer index formula has the following structure. First, the principal symbol of the operator $D$ defines a Chern character $\text{ch}(\sigma_D)$ which is a cohomology class with compact support in $TM$, the tangent bundle on $M$. In addition, there exists a cohomology class $\text{td}(TM\otimes \mathbb{C})$, called the Todd class, which give rise to the following integral expression for the index of $D$
\begin{equation}\label{as}
\text{ind}(D)=(-1)^n \int_{TM}\text{ch}(\sigma_D)\text{td}(TM\otimes \mathbb{C}),
\end{equation}
where $n=\dim(M)$, see \cite{AS1}. Although the general Atiyah-Singer index theorem applies for compact Lie groups, our main goal is to write the index of elliptic operators as an integral expression taking advantage of the Ruzhansky-Turunen operator-valued  calculus \cite{Ruz}. So, if $\delta_g\in \mathscr{D}'(G)$ is the Dirac point mass distribution at $g\in G,$ and $D=A\in \Psi^0(G)$ is an elliptic operator on $G,$ we prove that
\begin{equation}\label{CardonaDuvanIF}
\textnormal{ind}(A)=\int_{G}\mu_\gamma(g)dg,\,\,
\end{equation} where \begin{equation}
\mu_\gamma(g):=\exp(-\gamma \sigma_{A^*}(g)\sigma_A(g))\delta_g(g)-\exp({-\gamma \sigma_{A}(g)\sigma_{A^*}(g)})\delta_g(g),
\end{equation}
for all $\gamma>0.$ The right hand side of \eqref{CardonaDuvanIF} is understood in the sense of distributions. The corresponding index theorem for operators of general order will be given in Theorem \ref{generalorder}. As it will be observed, our instrumental tool will be the McKean-Singer lemma
\begin{equation}\label{MSIndexFormula}
\textnormal{ind}(A)=\textnormal{tr}(e^{-tA^*A})-\textnormal{tr}(e^{-tAA^*}),\,\,\,t>0.
\end{equation} In our case, the McKean-Singer lemma implies a local index formula of the form
\begin{equation}
\textnormal{ind}(A)=\int_{G}\mu_0(g)dg
\end{equation}
for some density $\mu_0$ on $G$ defined by certain geometrical invariants (see the classical work \cite{atiyabott2} of Atiyah, Bott and Patody).  In this case, the operator-valued symbolic calculus of Ruzhansky and Turunen simplifies the McKean-Singer formula \eqref{MSIndexFormula} to the expression \eqref{CardonaDuvanIF}.  Certainly, the density $\mu_\gamma$ is defined by the operator valued symbols associated to $A$ and $A^*$ and consequently by the global Fourier analysis associated to every compact Lie group. On the other hand, if $\widehat{G}$ denotes the set of equivalence classes of strongly continuous, irreducible and unitary representations on $G,$ we show that every elliptic operator $A\in \Psi^m(G)$ has Fredholm index given by
\begin{equation}
\textnormal{ind}(A)=\int_{G}\sum_{[\xi]\in \widehat{G}}\dim{(\xi)}\textnormal{Tr}((\mu_\gamma(x)\xi)(e_G))\,dx,
\end{equation}
where
\begin{equation}
(\mu_\gamma(x)\xi)(e_G)=(\exp(-\gamma \sigma_{A^*}(x)\sigma_{A}(x))\xi)(e_{G})-(\exp(-\gamma \sigma_{A}(x) \sigma_{A^*}(x) )\xi)(e_G),
\end{equation}
here $e_G$ is the identity element of $G.$ In this case, $\tilde{\mu}_{\gamma}(x)\xi:= \dim{(\xi)}\textnormal{Tr}((\mu_\gamma(x)\xi)(e_G))$ can be viewed as a density on the non-commutative phase space $G\times \widehat{G}.$  So, the index of elliptic operators on compact Lie groups can be written in terms of the algebraic information  in the representation theory of the group and the operator-valued symbol of these operators. We refer the reader to the references \cite{BerlineVergne,Bott,Goette,Hong,Hong2,Sabin2011,Urakawa} where certain index formulae have been proved for specific elliptic operators on homogeneous spaces $G/K$.

This paper is organized as follows. In Section \ref{Preliminaries} we present some basics on the Fourier analysis used in our context and the global quantization of operators trough of operator-valued symbols. The operator-valued functional calculus will be developed in Section \ref{Operator-valued fucntional calculus}. In Section \ref{MAINTTT} we prove our index formulae.  Finally, in Section \ref{examples} we provide explicit index formulae for elliptic operators on the torus, and the  groups $\textnormal{SU}(2)$ and $\textnormal{SU}(3).$

\section{Preliminaries}\label{Preliminaries}

In this section we present some topics on compact Lie groups,  the Fourier analysis used here, and the operator valued calculus of Ruzhansky and Turunen. For this we follow \cite{Fe1} and \cite{Ruz}. The reference \cite{Hor2} includes a complete background on the theory of pseudo-differential operators.

\subsection{The operator-valued quantization}
Throughout of this paper $G$ is a compact Lie group endowed with its normalised Haar measure $dg$. For $f\in \mathscr{D}'(G),$ the respective right-convolution operator $r(f):C^\infty(G)\rightarrow C^\infty(G)$ is defined by
\begin{eqnarray}
r(f)g=g\ast f,\,\,\,g\in C^{\infty}(G).
\end{eqnarray}
If $f\in L^2(G),$ the (right) global Fourier transform is defined by \begin{equation}
r(f)=\int_{G}f(y)\pi_R(y)^*dy,
\end{equation}
where $\pi_R$ is the right regular representation on $G,$ defined by $\pi_R(x)f(y)=f(yx)$ and $\pi_{R}(x)^*=\pi_{R}(x^{-1}),$  $x\in G.$ In this case, the Fourier inversion formula gives
\begin{equation}
f(x)=\textnormal{tr}(r(f)\pi_R(x)),\,\,f\in C^{\infty}(G), x\in G.
\end{equation}
If $\rho:G\rightarrow \mathscr{B}(C^\infty(G))$ is a continuous operator, the pseudo-differential operator $A$ associated to $\rho,$ is defined by
\begin{equation}
Af(x)=\textnormal{tr}(\rho(x)r(f)\pi_R(x)),\,\,f\in C^{\infty}(G).
\end{equation}

Conversely, if $A:C^\infty(G)\rightarrow C^\infty(G)$ is a continuous linear operator, then there exists an unique $\sigma_{A}:G \rightarrow \mathscr{B}(C^\infty(G))$ (called the operator-valued symbol of $A$) satisfying
\begin{equation}
Af(x)=\textnormal{tr}(\sigma_A(x)r(f)\pi_R(x)),\,\,f\in C^{\infty}(G).
\end{equation} The symbol $\sigma_A$ is defined as follows. Let $K_A\in C^\infty(G)\widehat\otimes\mathscr{D}'(G)$ be the distributional Schwartz kernel of $A$ and $R_{A}(x,y)=K(x,y^{-1}x)$ is the right-convolution kernel associated to $A.$ If  $x\in G,$ and $R_A(x)\in \mathscr{D}'(G)$ is defined by $(R_A(x))(y)=R_A(x,y)$ for every $y\in G,$ the (right) operator-valued symbol $\rho=\sigma_A$ associated to $A$ is defined by 
$
\sigma_A(x):=r(R_A(x)),\,\,\,x\in G.
$
In our further analysis will be useful the following composition theorem.
\begin{proposition}[Composition formula for operator valued symbols]\label{composition}
Let us assume that $A$ and $E$ are continuous linear operators on $C^\infty(G).$ Then, for every $x\in G$ we have
\begin{equation}
\sigma_{AE}(x)=\sigma_{A}(x)\sigma_{E}(x).
\end{equation}
\begin{proof}
Let us note that for $f\in C^\infty(G),$
\begin{equation}
AEf(x)=\textnormal{tr}(\sigma_A(x)r(Ef(\cdot))\pi_R(x)).
\end{equation} Since $$r(Ef(\cdot))=r((f\ast R_E(\cdot))(\cdot))=r(R_E(\cdot))r(f)=\sigma_E(\cdot)r(f),$$ we deduce
\begin{equation}
AEf(x)=\textnormal{tr}(\sigma_A(x)\sigma_E(x)r(f)\pi_R(x)),
\end{equation} and by uniqueness we have $\sigma_{AE}(x)=\sigma_A(x)\sigma_E(x).$ So, we finish the proof.
\end{proof}

\begin{remark}\label{remarkpotencia}     
If $A:C^\infty(G)\rightarrow C^\infty(G) $ is a continuous linear operator, for every $k\in\mathbb{N},$ the  operator  $A^k$ is continuous  on $C^\infty(G),$ and by the preceding result the operator valued symbol of $A^k$ satisfies $\sigma_{A^k}(x)=\sigma_{A}(x)^k,$  $x\in G.$ It is easy to see that if $A$ is invertible on $C^{\infty}(G),$ then $\sigma_{A^{-1}}(x)=\sigma_A(x)^{-1},$ for all $x\in G.$
\end{remark}

\end{proposition}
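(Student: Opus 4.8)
The plan is to apply the defining quantization identity to the composed operator $AE$, regarding it as $A$ acting on the smooth function $Ef$, and then to factor the resulting right global Fourier transform. Concretely, for $f\in C^\infty(G)$ and $x\in G$ I would first write
\begin{equation}
(AE)f(x)=A(Ef)(x)=\textnormal{tr}(\sigma_A(x)\,r(Ef)\,\pi_R(x)),
\end{equation}
which is nothing but the quantization formula for $A$ evaluated on the smooth function $Ef$. The whole point is then to recognize the operator $r(Ef)$ as a product involving the symbol of $E$.

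The key structural ingredient I would isolate is the anti-multiplicativity of the right global Fourier transform: since $r(h)g=g\ast h$, the associativity of convolution gives $r(R_E)\,r(f)\,g=(g\ast f)\ast R_E=g\ast(f\ast R_E)=r(f\ast R_E)\,g$, hence $r(f_1\ast f_2)=r(f_2)\,r(f_1)$ for all $f_1,f_2$. Because $E$ has right-convolution kernel $R_E$, the function $Ef$ admits the representation $Ef=f\ast R_E$, so that
\begin{equation}
r(Ef)=r(f\ast R_E)=r(R_E)\,r(f)=\sigma_E(x)\,r(f).
\end{equation}
Substituting this into the previous display yields
\begin{equation}
(AE)f(x)=\textnormal{tr}(\sigma_A(x)\,\sigma_E(x)\,r(f)\,\pi_R(x)).
\end{equation}
Comparing with the defining identity $(AE)f(x)=\textnormal{tr}(\sigma_{AE}(x)\,r(f)\,\pi_R(x))$ and invoking the uniqueness of the operator-valued symbol (each continuous operator on $C^\infty(G)$ determines its symbol uniquely) then forces $\sigma_{AE}(x)=\sigma_A(x)\sigma_E(x)$ for every $x\in G$.

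The delicate point, and the step I expect to require the most care, is the factorization $r(Ef)=\sigma_E(x)\,r(f)$. Although $r(Ef)$ is a single operator independent of $x$ whereas $\sigma_E(x)=r(R_E(x))$ genuinely varies with $x$, the identity must be read inside the trace at the base point $x$, where the right-convolution kernel $R_E(\cdot)$ is frozen; that is, one uses the variable-coefficient representation $Ef(w)=(f\ast R_E(w))(w)$ and checks that applying $r$ yields the claimed product with $R_E$ evaluated at the correct point. Making this bookkeeping precise, in tandem with the anti-homomorphism property of $f\mapsto r(f)$, is where the real content of the proposition lies; once it is in place, the remaining comparison and the appeal to uniqueness of the symbol are purely formal.
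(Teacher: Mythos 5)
Your proposal follows the paper's proof essentially verbatim: you apply the quantization identity to $A$ evaluated on the smooth function $Ef$, factor $r(Ef)=r(f\ast R_E)=r(R_E)\,r(f)=\sigma_E(\cdot)\,r(f)$ via the anti-multiplicativity of the right global Fourier transform (which the paper uses without isolating it as you do), and conclude by the uniqueness of the operator-valued symbol. The ``delicate point'' you flag at the end---that the right-convolution kernel $R_E(\cdot)$ must be read as frozen at the base point inside the trace---is precisely the step the paper itself glosses with its $r(Ef(\cdot))=r(R_E(\cdot))\,r(f)$ notation, so on that score your argument and the paper's coincide as well.
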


\subsection{Fredholm operators} The index is defined for a broad class of operators called Fredholm operators. Now,  we introduce this notion in more detail. For $X,Y$ normed spaces $B(X,Y)$ is the set of bounded linear operators from $X$ into $Y.$
\begin{definition}
 If $H_1$ and  $H_2$ are Hilbert spaces, the closed and densely defined operator $A:H_1\rightarrow H_2$ is Fredholm if only if $\text{Ker}(A)$ is finite dimensional and $\textnormal{Im}(A)=\text{Rank}(A)$ is a closed subspace of $H_2$ with finite codimension. In this case, the index of $A$ is defined by $\text{ind}(A)=\dim  \text{Ker}(A)-\dim  \text{Coker}(A).$ The index formula also can be written as
 $$\text{ind}(A)=\text{dim}\text{Ker} (A)-\text{dim}\text{Ker} (A^*).  $$
\end{definition}
 
Now, we end this subsection with  a result now known as McKean-Singer index formula. We present the proof by completeness. 
\begin{lemma}\label{heatapproach}
Let us assume that $T:H_{1}\rightarrow H_{2}$ is a Fredholm operator, $ TT^{*} $ and $T^*T$ have a discrete spectrum, and for all $t>0,$ $ e^{-tT^*T}$ and $e^{-tTT^*}$ are trace class. Then \begin{equation}
\textnormal{ind}(T)=\textnormal{tr}(e^{-tT^*T})-\textnormal{tr}(e^{-tTT^*}).
\end{equation}  
\end{lemma}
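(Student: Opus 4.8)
The plan is to reduce the identity to a comparison of the spectral data of the two nonnegative self-adjoint operators $T^*T$ and $TT^*$, exploiting the intertwining relations $T(T^*T)=(TT^*)T$ and $T^*(TT^*)=(T^*T)T^*$. The guiding observation is that, apart from the eigenvalue $0$, these two operators share exactly the same eigenvalues with the same finite multiplicities. Once this is established, the two heat traces can differ only through the contributions coming from their respective kernels, and that difference is precisely the index.

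First I would identify the kernels. From the elementary identities $\langle T^*Tx,x\rangle=\|Tx\|^2$ and $\langle TT^*y,y\rangle=\|T^*y\|^2$ one deduces $\textnormal{Ker}(T^*T)=\textnormal{Ker}(T)$ and $\textnormal{Ker}(TT^*)=\textnormal{Ker}(T^*)$, both finite dimensional by the Fredholm hypothesis. Next I would set up the correspondence on the positive part of the spectrum. Fixing $\lambda>0$, write $E_\lambda=\{x: T^*Tx=\lambda x\}$ and $F_\lambda=\{y: TT^*y=\lambda y\}$ for the corresponding eigenspaces, which are finite dimensional since the spectra are discrete. If $x\in E_\lambda$, the intertwining relation gives $TT^*(Tx)=T(T^*Tx)=\lambda Tx$, so $T$ maps $E_\lambda$ into $F_\lambda$; moreover $\|Tx\|^2=\langle T^*Tx,x\rangle=\lambda\|x\|^2$ shows that $T|_{E_\lambda}$ is injective, and in particular sends nonzero vectors to nonzero vectors. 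Symmetrically, $T^*$ maps $F_\lambda$ injectively into $E_\lambda$, whence $\dim E_\lambda=\dim F_\lambda$ for every $\lambda>0$.

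Finally I would compute the traces. Because $T^*T$ and $TT^*$ have discrete spectrum and the associated heat semigroups are trace class, each trace is the absolutely convergent sum of $e^{-t\lambda}$ over the eigenvalues counted with multiplicity, so
\[
\textnormal{tr}(e^{-tT^*T})=\dim\textnormal{Ker}(T)+\sum_{\lambda>0}(\dim E_\lambda)\,e^{-t\lambda},
\]
\[
\textnormal{tr}(e^{-tTT^*})=\dim\textnormal{Ker}(T^*)+\sum_{\lambda>0}(\dim F_\lambda)\,e^{-t\lambda},
\]
the sums being taken over the common set of positive eigenvalues. Subtracting and invoking $\dim E_\lambda=\dim F_\lambda$, every positive-eigenvalue term cancels term by term, and what remains is $\dim\textnormal{Ker}(T)-\dim\textnormal{Ker}(T^*)=\textnormal{ind}(T)$ by the index formula recorded in the definition above. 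As a byproduct, the right-hand side is manifestly independent of $t>0$.

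The step I expect to be the main obstacle is the rigorous matching of multiplicities, that is, verifying that $T$ and $T^*$ really induce mutually inverse isomorphisms between $E_\lambda$ and $F_\lambda$ rather than merely injections; here one must use $\lambda>0$ to write $T^*T|_{E_\lambda}=\lambda\,\textnormal{Id}$, so that $\lambda^{-1}T^*$ is a genuine inverse of $T|_{E_\lambda}$. A secondary point requiring care is the justification that the two expansions converge absolutely, which legitimizes the term-by-term cancellation and is exactly what the trace-class hypothesis supplies.
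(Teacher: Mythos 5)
Your proposal is correct and follows essentially the same route as the paper's own proof: use the intertwining relation $TT^*(Tx)=\lambda Tx$ to match the nonzero spectra of $T^*T$ and $TT^*$, so the heat traces cancel except for the kernel contributions, which equal $\dim\textnormal{Ker}(T)-\dim\textnormal{Ker}(T^*)=\textnormal{ind}(T)$ via $\textnormal{Ker}(T^*T)=\textnormal{Ker}(T)$ and $\textnormal{Ker}(TT^*)=\textnormal{Ker}(T^*)$. In fact your argument is slightly more complete than the paper's, since you verify that $T|_{E_\lambda}$ and $\lambda^{-1}T^*|_{F_\lambda}$ are mutually inverse, hence that the \emph{multiplicities} of each positive eigenvalue agree --- a point the paper's proof asserts only at the level of eigenvalues.
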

\begin{proof}
Let $\lambda \in \Sigma(T^*T),$ $\lambda\neq 0,$ then there exists a non-zero vector $\phi\in H_{1}$ such that $T^*T(\phi)= \lambda \phi.$ Then, $TT^*T(\phi)= \lambda T\phi$ so that if $T(\phi)$ is non-zero, then it is an eigenvalue of $TT^*.$ It follows that the non-zero eigenvalues of $TT^*$ and $TT^*$ are the same. Thus
$$ \textnormal{tr}(e^{-tT^*T})-\textnormal{tr}(e^{-tTT^*})=\dim \text{Ker} (T^{*}T)-\dim \text{Ker} (TT^{*}).$$
Since $\text{Ker}(A^*A)=\text{Ker}(A)$ and $\text{Ker}(AA^*)=\text{Ker}(A^*)$ we end the proof.
\end{proof}  
\subsection{The matrix-valued quantization} Let us consider for every  compact Lie group $G$ its unitary dual $\widehat{G},$ that is the set of continuous, irreducible, and  unitary representations on $G.$  If $[\xi]\in\widehat{ G},$ and $\xi:G\rightarrow U(\mathbb{C}^{d_\xi}),$ the following equalities follow from the Fourier transform on $G$
 $$  \widehat{f}(\xi)=\int_{G}\varphi(x)\xi(x)^*dx\in \mathbb{C}^{d_\xi\times d_\xi},\, f(x)=\sum_{[\xi]\in \widehat{G}}d_{\xi}\text{Tr}(\xi(x)\widehat{f}(\xi)),\,\,x\in G,f\in C^{\infty}(G) ,$$
and the Peter-Weyl Theorem on $G$ implies the Plancherel Theorem on $L^2(G),$
$$ \Vert f \Vert_{L^2(G)}= \left(\sum_{[\xi]\in \widehat{G}}d_{\xi}\text{Tr}(\widehat{f}(\xi)\widehat{f}(\xi)^*) \right)^{\frac{1}{2}}=\Vert  \widehat{f}\Vert_{ L^2(\widehat{G} ) } .$$
\noindent Notice that, since $\Vert A \Vert_{HS}^2=\text{Tr}(AA^*)$, the term within the sum is defined by the Hilbert-Schmidt norm of the matrix $\widehat{f}(\xi)$. Any linear operator $A$ on $G$ mapping $C^{\infty}(G)$ into $\mathcal{D}'(G)$ gives rise to a {\em matrix-valued global (or full) symbol} $\sigma_{A}(x,\xi)\in \mathbb{C}^{d_\xi \times d_\xi}$ given by
\begin{equation}
\sigma_A(x,\xi)=\xi(x)^{*}(A\xi)(x),\,\,(A\xi)(x):=(A\xi_{ij})_{i,j=1}^{d_\xi},\,\,x\in G,\,[\xi]\in\widehat{G},
\end{equation}
which can be understood from the distributional viewpoint. Then it can be shown that the operator $A$ can be expressed in terms of such a symbol as \cite{Ruz}
\begin{equation}\label{mul}Af(x)=\sum_{[\xi]\in \widehat{G}}d_{\xi}\text{Tr}[\xi(x)\sigma_A(x,\xi)\widehat{f}(\xi)],\,\,x\in G. 
\end{equation}

The Hilbert space $L^2(\widehat{G})$ is defined by the norm  $$\Vert \Gamma \Vert^2_{L^2(\widehat{G})}=\sum_{[\xi]\in\widehat{G}}d_{\xi}\Vert \Gamma (\xi)\Vert^2_{HS},\,\,\,\Gamma(\xi)\in \mathbb{C}^{d_\xi\times d_\xi}.$$ 
Now, we want to introduce Sobolev spaces and, for this, we give some basic tools. \noindent Let $\xi\in \textnormal{Rep}(G):=\cup \widehat{G},$ if $x\in G$ is fixed, $\xi(x):H_{\xi}\rightarrow H_{\xi},$  $H_\xi\cong \mathbb{C}^{d_\xi},$ is an unitary operator and $d_{\xi}:=\dim H_{\xi} <\infty.$ There exists a non-negative real number $\lambda_{[\xi]}$ depending only on the equivalence class $[\xi]\in \hat{G},$ but not on the representation $\xi,$ such that $-\mathcal{L}_{G}\xi(x)=\lambda_{[\xi]}\xi(x);$ here $\mathcal{L}_{G}$ is the Laplacian on the group $G$ (in this case, defined as the Casimir element on $G$). Let  $\langle \xi\rangle$ denote the function $\langle \xi \rangle=(1+\lambda_{[\xi]})^{\frac{1}{2}}$.  
\begin{definition}\label{sov} For every $s\in\mathbb{R},$ the {\em Sobolev space} $H^s(G)$ on the Lie group $G$ is  defined by the condition: $f\in H^s(G)$ if only if $\langle \xi \rangle^s\widehat{f}\in L^{2}(\widehat{G})$. 
\end{definition}
The Sobolev space $H^{s}(G)$ is a Hilbert space endowed with the inner product $\langle f,g\rangle_{s}=\langle \Lambda_{s}f, \Lambda_{s}g\rangle_{L^{2}(G)}$, where, for every $r\in\mathbb{R}$, $\Lambda_{s}:H^r\rightarrow H^{r-s}$ is the bounded pseudo-differential operator with symbol $\langle \xi\rangle^{s}I_{\xi}$. In this paper the notion of Sobolev spaces $H^{s}(G)$ is essential. Indeed, every elliptic operator $T\in \Psi^m(G)$ of order $m$ is a bounded operator from $H^{s}(G)$ into $H^{s-m}(G)$   and, more importantly, its index --as an operator from $C^{\infty}(G)$ to $C^{\infty}(G)$-- agrees with the index of $T$ as operator acting from $H^{s}(G)$ into $H^{s-m}(G)$, for every $s\in \mathbb{R}$.
\begin{definition} Let $(Y_{j})_{j=1}^{\text{dim}(G)}$ be a basis for the Lie algebra $\mathfrak{g}$ of $G$, and let $\partial_{j}$ be the left-invariant vector fields corresponding to $Y_j$. We define the differential operator associated to such a basis by $D_{Y_j} = \partial_{j}$ and, for every $\alpha\in\mathbb{N}^n$, the {\em differential operator} $\partial^{\alpha}_{x}$ is the one given by $\partial_x^{\alpha}=\partial_{1}^{\alpha_1}\cdots \partial_{n}^{\alpha_n}$. Now, if $\xi_{0}$ is a fixed irreducible  representation, the matrix-valued {\em difference operator} is the given by $\mathbb{D}_{\xi_0}=(\mathbb{D}_{\xi_0,i,j})_{i,j=1}^{d_{\xi_0}}=\xi_{0}(\cdot)-I_{d_{\xi_0}}$. If the representation is fixed we omit the index $\xi_0$ so that, from a sequence $\mathbb{D}_1=\mathbb{D}_{\xi_0,j_1,i_1},\cdots, \mathbb{D}_n=\mathbb{D}_{\xi_0,j_n,i_n}$ of operators of this type we define $\mathbb{D}^{\alpha}=\mathbb{D}_{1}^{\alpha_1}\cdots \mathbb{D}^{\alpha_n}_n$, where $\alpha\in\mathbb{N}^n$. Other properties on these differences operators can be found in \cite{RuzWirth2015}. See also \cite{Fischer}.
\end{definition}
Now we introduce, for every $m\in\mathbb{R}$, the H\"ormander class $\Psi^{m}(G)$ of pseudo-differential operators of order $m$ on the compact Lie group $G$. As a compact manifold we consider $\Psi^{m}(G)$ as the set of those operators which, in all local coordinate charts, give rise to pseudo-differential operators in the H\"ormander class $\Psi^{m}(U)$ for an open set $U \subset \mathbb{R}^n$, characterized by symbols satisfying the usual estimates \cite{Hor2}
\begin{equation}
|\partial_{x}^{\alpha}\partial_{\xi}^{\beta}\sigma(x,\xi)|\leq C_{\alpha,\beta}\langle \xi\rangle^{m-|\beta|},
\end{equation}
for all $(x,\xi) \in T^*U \cong \mathbb{R}^{2n}$ and $\alpha,\beta\in \mathbb{N}^n$. This class contains, in particular, differential operator of degree $m>0$ and other well-known operators in global analysis such as heat kernel operators.
\\
\\
We reserve the notation $\Psi^{m}(G\times \widehat{G})=:\Psi^{m}(G)$ for pseudo-differential operators of order $m$ on $G$.    The H\"ormander classes $\Psi^{m}(G)$ were characterized in \cite{Ruz} (see also \cite{RuzTurWirth2014}) by the condition: $A\in \Psi^{m}(G)$ if only if its matrix-valued symbol $\sigma_{A}(x,\xi)$ satisfies the inequalities
\begin{equation}
\Vert \partial_{x}^{\alpha}\mathbb{D}^{\beta}\sigma_{A}(x,\xi)\Vert_{op} \leq C_{\alpha,\beta} \langle \xi\rangle^{m-|\beta|},\,\,x\in G,\,[\xi]\in\widehat{G},
\end{equation}
for every $\alpha,\beta\in \mathbb{N}^n.$ For a rather comprehensive treatment of this quantization process we refer to \cite{Ruz}. In this paper we are interested in the index of elliptic operators in $\Psi^{m}(G)$, where $m\in \mathbb{R}.$ Now, we present the following theorem on elliptic pseudo-differential operators.

\begin{theorem}\label{elipticidadlie}
An operator $A\in \Psi^{m}(G)$  is elliptic if and only if its matrix-valued symbol $\sigma_{A}(x,\xi)$ is an invertible matrix for all but finitely many $[\xi]\in \widehat{G},$ and for all such $\xi$ and $x\in G$ satisfies
$$\Vert \sigma_{A}(x,\xi)^{-1}\Vert_{op} \leq C \langle \xi\rangle^{-m}. $$
Thus both statements are equivalent to the existence of $B\in \Psi^{-m}(G)$ such that $R_{1}=I-AB$ and $R_{2}=I-BA$ are smoothing. This means that $R_{i}\in \Psi^{-\infty}(G):=\cap_{m}\Psi^{m}(G), $ for $i=1,2.$ \end{theorem}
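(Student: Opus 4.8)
The plan is to prove this characterization of ellipticity by establishing the equivalence of three conditions: (i) $A$ is elliptic, (ii) the matrix-valued symbol $\sigma_A(x,\xi)$ is invertible off a finite set with the stated operator-norm bound on its inverse, and (iii) there exists a parametrix $B \in \Psi^{-m}(G)$ with $I-AB$ and $I-BA$ smoothing. I would structure the argument as a cycle of implications, since (iii) is the standard bridge connecting the analytic notion of ellipticity to the symbolic estimate.

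First I would show (ii) $\Rightarrow$ (iii) by constructing the parametrix directly at the symbolic level. Given that $\sigma_A(x,\xi)$ is invertible for all $[\xi]\notin F$ (a finite set) with $\Vert \sigma_A(x,\xi)^{-1}\Vert_{op} \leq C\langle\xi\rangle^{-m}$, I would define a first candidate symbol $\sigma_{B_0}(x,\xi) = \sigma_A(x,\xi)^{-1}$ for $[\xi]\notin F$ and $0$ otherwise. The estimate on the inverse, together with the Leibniz-type rules for the operators $\partial_x^\alpha$ and $\mathbb{D}^\beta$ acting on $\sigma_A^{-1}$, shows that $\sigma_{B_0}$ satisfies the symbol inequalities of order $-m$, so $B_0 \in \Psi^{-m}(G)$. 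The composition $AB_0$ then has symbol $I + r$ where $r$ lies in $\Psi^{-1}(G)$ (the gain of one order coming from the asymptotic composition formula for the H\"ormander calculus on $G$ developed in \cite{Ruz}), and one corrects $B_0$ by the standard Neumann-series iteration $B \sim B_0 \sum_{j\geq 0} (-r)^j$, asymptotically summed, to kill the remainder to all orders and obtain $I-AB, I-BA \in \Psi^{-\infty}(G)$.

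Next I would close the cycle with the routine implications. For (iii) $\Rightarrow$ (i): if a parametrix $B$ exists with smoothing remainders $R_1,R_2$, then $A$ is elliptic essentially by definition — the smoothing operators are compact on every Sobolev scale, so $A$ is invertible modulo compacts and hence Fredholm, and the local regularity $Af \in C^\infty \Rightarrow f \in C^\infty$ follows from $f = BAf + R_2 f$. For (i) $\Rightarrow$ (ii): ellipticity in the local H\"ormander sense gives invertibility of the classical (local) principal symbol for large $\xi$; one transfers this to the matrix-valued symbol via the correspondence between the two symbol classes recorded earlier in the excerpt, noting that $\langle\xi\rangle^{-m}$-control of the inverse is exactly the dual estimate to the order-$m$ lower bound $\Vert\sigma_A(x,\xi)v\Vert \geq c\langle\xi\rangle^m\Vert v\Vert$ that local ellipticity provides for large $[\xi]$, with only finitely many exceptional classes.

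The main obstacle I expect is the technical heart of (ii) $\Rightarrow$ (iii): verifying that $\sigma_A^{-1}$ genuinely lies in the symbol class $\Psi^{-m}$, which requires controlling $\partial_x^\alpha \mathbb{D}^\beta (\sigma_A^{-1})$ through the noncommutative Leibniz formula for the difference operators $\mathbb{D}^\beta$. Unlike the Euclidean case, the difference operators $\mathbb{D}_{\xi_0}$ do not satisfy a naive product rule, and one must use the refined calculus of \cite{RuzWirth2015} to expand $\mathbb{D}^\beta(\sigma_A \cdot \sigma_A^{-1}) = \mathbb{D}^\beta(I) = 0$ and solve recursively for the derivatives of $\sigma_A^{-1}$, tracking the order of each term. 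Because this is precisely the content of the characterization theorem in \cite{Ruz} and \cite{RuzTurWirth2014}, I would invoke those references for the calculus and present the parametrix construction at the level of detail needed to make the recursion transparent, rather than re-deriving the full symbolic calculus.
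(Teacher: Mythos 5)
The paper never proves this theorem: it is stated in the preliminaries as a known result, imported from \cite{Ruz} and \cite{RuzTurWirth2014}, where the characterization of $\Psi^m(G)$ by matrix-valued symbols and the ellipticity/parametrix equivalence are established. So your proposal must be measured against that literature proof, and in its two substantive arcs it matches it: the parametrix construction (ii) $\Rightarrow$ (iii) via $\sigma_{B_0}=\sigma_A^{-1}$ off the finite exceptional set, the Leibniz-type expansion for $\mathbb{D}^\beta(\sigma_A\sigma_A^{-1})=0$ to verify the order $-m$ estimates, and Neumann iteration with asymptotic summation is exactly the standard route (note only that setting $\sigma_{B_0}=0$ on finitely many $[\xi]$ perturbs the operator by a smoothing term, which you implicitly use, and that the iteration produces a right parametrix, with left $=$ right modulo $\Psi^{-\infty}(G)$ by the usual two-line argument).

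The soft spot is your (i) $\Rightarrow$ (ii) leg. As sketched — transferring the local lower bound $\Vert\sigma_A^{\mathrm{loc}}(x,\xi)v\Vert\geq c\langle\xi\rangle^m\Vert v\Vert$ directly into invertibility of the global matrix symbol off finitely many classes — this does not work as a pointwise dictionary: $\sigma_A(x,\xi)=\xi(x)^*(A\xi)(x)$ at a fixed $[\xi]\in\widehat{G}$ is a global object not determined by the local principal symbol, and large-$|\xi|$ ellipticity in charts does not by itself single out a finite exceptional subset of $\widehat{G}$. The proof in \cite{RuzTurWirth2014} closes this direction through the parametrix instead: from local ellipticity one gets $B\in\Psi^{-m}(G)$ by classical theory on the compact manifold (so (i) $\Rightarrow$ (iii)), and then, using the composition expansion together with the rapid decay $\Vert\sigma_{R_2}(x,\xi)\Vert_{op}\leq C_N\langle\xi\rangle^{-N}$ for the smoothing remainder, one finds
\begin{equation*}
\sigma_B(x,\xi)\,\sigma_A(x,\xi)=I+O(\langle\xi\rangle^{-1})\quad\text{in operator norm},
\end{equation*}
so $\sigma_A(x,\xi)$ is invertible once $\langle\xi\rangle$ is large — hence for all but finitely many $[\xi]$, since $\widehat{G}$ is discrete and $\langle\xi\rangle\to\infty$ — with $\sigma_A(x,\xi)^{-1}=(I+O(\langle\xi\rangle^{-1}))^{-1}\sigma_B(x,\xi)$ inheriting the $C\langle\xi\rangle^{-m}$ bound from $\sigma_B\in\Psi^{-m}$. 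Reordering your cycle as (ii) $\Rightarrow$ (iii) $\Rightarrow$ (ii) and (i) $\Leftrightarrow$ (iii) repairs the argument; similarly, for (iii) $\Rightarrow$ (i) the detour through Fredholmness is unnecessary (and Fredholmness alone does not formally yield ellipticity without an extra argument): localizing the identity $BA=I-R_2$ in a chart shows the local principal symbol of $B$ inverts that of $A$ for large $\xi$, which is ellipticity directly.
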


\section{Operator-valued functional calculus for operators on compact Lie groups}\label{Operator-valued fucntional calculus}

In this section we develop the operator-valued functional calculus for pseudo-differential operators on compact Lie groups. It is important to mention that the  matrix-valued functional calculus has been developed by M. Ruzhansky and J. Wirth in \cite{RuzWirth2014}. Our starting point  is the following lemma where  the set $\rho(T)$ denotes the resolvent set of $T.$

\begin{lemma}\label{resolventesymbol}
Let us assume that $\omega\in \rho(T),$ and $T:C^\infty(G)\rightarrow C^\infty(G)$ extends to a closed operator on $L^2(G).$ Then for all $x\in G,$ $\omega\in \rho(\sigma_{T}(x))$ and we have the following identity for the resolvent operator of $T.$
\begin{equation}
\sigma_{(\omega-T)^{-1}}(x)=(\omega-\sigma_T(x))^{-1}.
\end{equation}
\end{lemma}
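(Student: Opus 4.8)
The plan is to show that $\omega - \sigma_T(x)$ is invertible with inverse $\sigma_{(\omega-T)^{-1}}(x)$, and the natural route is to exploit the composition formula for operator-valued symbols (Proposition \ref{composition}). The key observation is that since $\omega \in \rho(T)$, the operator $\omega - T : C^\infty(G) \to C^\infty(G)$ has a bounded two-sided inverse $(\omega - T)^{-1}$ on $L^2(G)$, and I would need this inverse to preserve $C^\infty(G)$ so that its operator-valued symbol is defined. Assuming this (which should follow from $T$ extending a closed operator on $L^2(G)$ together with the standard mapping properties, though one must be slightly careful here), I would apply $\sigma$ to both identities $(\omega - T)(\omega - T)^{-1} = I$ and $(\omega - T)^{-1}(\omega - T) = I$.

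First I would compute the symbol of the identity operator: since $I f(x) = \textnormal{tr}(\sigma_I(x) r(f) \pi_R(x)) = f(x)$ must match the Fourier inversion formula $f(x) = \textnormal{tr}(r(f)\pi_R(x))$, uniqueness of the symbol gives $\sigma_I(x) = \mathrm{Id}$ for all $x$. Next, using linearity of the symbol assignment together with Remark \ref{remarkpotencia} (for $k=1$, giving $\sigma_T(x)$ as the symbol of $T$), I observe that $\sigma_{\omega - T}(x) = \omega\,\mathrm{Id} - \sigma_T(x) = \omega - \sigma_T(x)$. Then the composition formula applied to $(\omega-T)(\omega-T)^{-1}=I$ yields
\begin{equation}
(\omega - \sigma_T(x))\,\sigma_{(\omega-T)^{-1}}(x) = \sigma_I(x) = \mathrm{Id},
\end{equation}
and the other order gives $\sigma_{(\omega-T)^{-1}}(x)\,(\omega - \sigma_T(x)) = \mathrm{Id}$. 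Together these show that $\omega - \sigma_T(x)$ is invertible on $C^\infty(G)$ with two-sided inverse $\sigma_{(\omega-T)^{-1}}(x)$, i.e. $\omega \in \rho(\sigma_T(x))$ and the claimed resolvent identity holds.

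The main obstacle I anticipate is not the algebra, which is essentially a one-line consequence of the composition formula, but rather the functional-analytic justification that $(\omega - T)^{-1}$ maps $C^\infty(G)$ continuously into itself so that the composition formula is applicable and $\sigma_{(\omega-T)^{-1}}(x)$ is a well-defined operator on $C^\infty(G)$. This requires knowing that the resolvent, a priori only bounded on $L^2(G)$, restricts to a continuous operator on the Fréchet space $C^\infty(G)$; in the intended setting $T$ is (elliptic and hence) a pseudo-differential operator whose resolvent inherits smoothing/mapping properties, so one appeals to the regularity theory giving $(\omega-T)^{-1} : C^\infty(G) \to C^\infty(G)$. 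I would flag this hypothesis carefully, since the statement as written only assumes $T$ extends to a closed operator on $L^2(G)$, and the symbolic identity presupposes that $\sigma_{(\omega-T)^{-1}}$ is defined in the first place.
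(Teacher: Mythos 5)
Your proof is correct and takes essentially the same route as the paper's: both apply the composition formula of Proposition \ref{composition} to the two resolvent identities $(\omega-T)(\omega-T)^{-1}=I=(\omega-T)^{-1}(\omega-T)$, using (as you make explicit and the paper leaves implicit) that the symbol of the identity is the identity and that the symbol map is linear, so that $\sigma_{\omega-T}(x)=\omega-\sigma_T(x)$. You are in fact more careful than the paper itself, whose proof silently assumes the point you rightly flag — that $(\omega-T)^{-1}$, a priori only bounded on $L^2(G)$, restricts to a continuous operator on $C^\infty(G)$ so that its operator-valued symbol is defined and Proposition \ref{composition} is applicable.
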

\begin{proof}
If $\omega \in \rho(T),$ then $R_\omega(T):=\omega-T$ is a bounded and invertible operator on $L^2(G)$ and consequently,
\begin{equation}
 (\omega-T)^{-1} (\omega-T) =I=(\omega-T)(\omega-T)^{-1}. 
\end{equation}
By Proposition \ref{composition}, we have
\begin{equation}
\sigma_{ (\omega-T)^{-1}  }(x)\sigma_{ (\omega-T)  }(x)=\sigma_{ (\omega-T)^{-1}  }(x)( \omega-\sigma_{T}(x))=I,  
\end{equation} and 
\begin{equation}
    ( \omega-\sigma_{T}(x))\sigma_{ (\omega-T)^{-1}  }(x)=  \sigma_{ (\omega-T)  }(x)\sigma_{ (\omega-T)^{-1}}(x)=I.
\end{equation} This analysis shows that $R_{\omega}(\sigma_T(x)):=\omega-\sigma_T(x)$ is an invertible operator on $L^2(G)$ and $$\sigma_{ (\omega-T)^{-1}}(x)=( \omega-\sigma_{T}(x))^{-1},$$ for all $x\in G.$ The proof is complete.
\end{proof}

Let us consider  continuous, symmetric  and linear operators $T:C^{\infty}(G)\rightarrow C^{\infty}(G) $ such that $T$ and $\sigma_{T}(x)$ admit    self-adjoint extensions  on $L^2(G),$ for all $x\in G.$ Examples of these operators arise with operators of the form $T=A^*A$ because $\sigma_{T}(x)=\sigma_{A}(x)^*\sigma_{A}(x).$ We keep the same notation for their self-adjoint extensions on $L^2(G)$. We recall that the spectral theorem (for bounded and unbounded self-adjoint operators) implies
\begin{equation}
T=\int\limits_{-\infty}^\infty\lambda dE_{\lambda},\,\,\,\sigma_T(x)=\int\limits_{-\infty}^\infty\lambda dE_{\lambda}(x),\,
\end{equation}
where   $\{E_{\lambda}\}_{-\infty<\lambda<\infty}$ and $\{E_{\lambda}(x)\}_{-\infty<\lambda<\infty}$ are the spectral measures  of $T$ and $\sigma_{T}(x)$ respectively. Now we present the main result of this section. We will use the Stone formula (see Theorem 7.17 of \cite{Weid})
\begin{equation}
E(\lambda)=\lim_{\delta\rightarrow 0^{+} }\lim_{\varepsilon\rightarrow 0^{+}}\int_{-\infty}^{\lambda+\delta}([t-i\varepsilon-T]^{-1}-[t+i\varepsilon-T]^{-1})dt,
\end{equation} in our further analysis.

\begin{theorem}\label{fucntionalformula} Let us assume that $T$ and $\sigma_T(x)$ are as in the previous discussion and $G:(-\infty,\infty)\rightarrow \mathbb{C}$ is a measurable function. Let us define the operator  $G(T)$   by the functional calculus:  $$G(T):=\int\limits_{-\infty}^\infty G(\lambda)dE_\lambda.$$ Then, the identity
\begin{equation}
G(T)f(x):=[G(\sigma_{T}(x))f](x),\,\,\,G(\sigma_{T}(x))=\int\limits_{-\infty}^\infty G(\lambda)dE_\lambda(x), \end{equation} holds true for every $x\in G$ and $f\in \textnormal{Dom}(G(T)).$
\end{theorem}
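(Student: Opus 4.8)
The plan is to show that, for each fixed $x\in G$, the symbol map $A\mapsto \sigma_A(x)$ transports the spectral calculus of $T$ onto the spectral calculus of $\sigma_T(x)$; that is, to establish the single operator identity $\sigma_{G(T)}(x)=G(\sigma_T(x))$ and then to read off the stated pointwise formula from the fundamental evaluation relation $Bf(x)=(\sigma_B(x)f)(x)$. This last relation is exactly the one already used implicitly in the proof of Proposition \ref{composition}, where $Ef(z)=(f\ast R_E(z))(z)=(\sigma_E(z)f)(z)$. Taking $B=G(T)$, the theorem reduces to proving $\sigma_{G(T)}(x)=G(\sigma_T(x))$, since then $G(T)f(x)=(\sigma_{G(T)}(x)f)(x)=(G(\sigma_T(x))f)(x)$.

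First I would settle the case of spectral projections, namely $\sigma_{E_\lambda}(x)=E_\lambda(x)$, where $\{E_\lambda\}$ and $\{E_\lambda(x)\}$ are the spectral measures of $T$ and of $\sigma_T(x)$. The engine here is the Stone formula recorded above together with Lemma \ref{resolventesymbol}. Applying the symbol map (at fixed $x$) to
\begin{equation}
E(\lambda)=\lim_{\delta\rightarrow 0^{+}}\lim_{\varepsilon\rightarrow 0^{+}}\int_{-\infty}^{\lambda+\delta}\left([t-i\varepsilon-T]^{-1}-[t+i\varepsilon-T]^{-1}\right)dt,
\end{equation}
and using the identity $\sigma_{[t\mp i\varepsilon-T]^{-1}}(x)=[t\mp i\varepsilon-\sigma_T(x)]^{-1}$ furnished by Lemma \ref{resolventesymbol} (valid since $t\mp i\varepsilon\in\rho(T)$ for the self-adjoint $T$), the right-hand side becomes precisely the Stone formula for the self-adjoint operator $\sigma_T(x)$. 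Hence $\sigma_{E(\lambda)}(x)=E(\lambda)(x)$, as claimed.

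Next I would propagate this identity through the functional calculus. For a simple function $G=\sum_j c_j\mathbf{1}_{(\lambda_{j-1},\lambda_j]}$ one has $G(T)=\sum_j c_j(E_{\lambda_j}-E_{\lambda_{j-1}})$, so the linearity of the symbol map together with the projection identity gives $\sigma_{G(T)}(x)=\sum_j c_j(E_{\lambda_j}(x)-E_{\lambda_{j-1}}(x))=G(\sigma_T(x))$. For a general measurable $G$, I would approximate $G$ by simple functions $G_n\rightarrow G$ and pass to the limit on $\mathrm{Dom}(G(T))$, using the spectral theorem to control $G_n(T)\rightarrow G(T)$ and the corresponding convergence for $\sigma_T(x)$. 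Combining $\sigma_{G(T)}(x)=G(\sigma_T(x))$ with the fundamental relation then yields $G(T)f(x)=(G(\sigma_T(x))f)(x)$.

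The hard part will be analytic rather than algebraic: one must justify interchanging the fixed-$x$ symbol map with the improper integral and the iterated limit $\varepsilon,\delta\rightarrow 0^{+}$ in the Stone formula, and with the simple-function approximation in the final step. Because $\sigma_A(x)=r(R_A(x))$ is built from the continuous assignment $A\mapsto R_A(x)$ followed by the convolution map, the symbol map is continuous (indeed closed) in the relevant operator topology, so these interchanges should be legitimate; the delicate points are the precise mode of convergence (strong operator topology on $L^2(G)$, respectively pointwise convergence on $C^\infty(G)$) and, when $T$ is unbounded, the domain bookkeeping ensuring that $f\in\mathrm{Dom}(G(T))$ matches $f\in\mathrm{Dom}(G(\sigma_T(x)))$ for every $x$.
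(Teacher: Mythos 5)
Your proposal is correct and follows essentially the same route as the paper: the paper's proof likewise combines Lemma \ref{resolventesymbol} with the Stone formula and then uses the evaluation relation $Bf(x)=(\sigma_B(x)f)(x)$ on the resolvents $B=(t\mp i\varepsilon -T)^{-1}$. The only organizational difference is that the paper evaluates at $f(x)$ immediately, obtaining the scalar identity $E_\lambda f(x)=E_\lambda(x)f(x)$ and integrating $G(\lambda)$ against it directly, which bypasses your simple-function approximation layer; the operator-level identity $\sigma_{G(T)}(x)=G(\sigma_T(x))$ that you route through is instead recovered later, when needed, by uniqueness of the quantization (as in the proof of Theorem \ref{Indexformulagamma}).
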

\begin{proof}
Let $x\in G$ be a fixed coordinate. By using Lemma \ref{resolventesymbol} and the Stone formula, we have
\begin{align*}
    E_\lambda(x) &=\lim_{\delta \rightarrow 0^{+}}\lim_{\varepsilon\rightarrow 0^{+}}\int\limits_{-\infty}^{\lambda+\delta}((t-i\varepsilon-\sigma_{T}(x))^{-1}-(t+i\varepsilon-\sigma_{T}(x))^{-1})dt\\
    &= \lim_{\delta \rightarrow 0^{+}}\lim_{\varepsilon\rightarrow 0^{+}}\int\limits_{-\infty}^{\lambda+\delta}(\sigma_{(t-i\varepsilon-T)^{-1}}(x)-\sigma_{(t+i\varepsilon-T)^{-1}}(x))dt.
    \end{align*} If $f\in C^\infty(G),$ then we deduce,
\begin{align*}
E_\lambda(x)f(x) &=\lim_{\delta \rightarrow 0^{+}}\lim_{\varepsilon\rightarrow 0^{+}}\int\limits_{-\infty}^{\lambda+\delta}(\sigma_{(t-i\varepsilon-T)^{-1}}(x)f(x)-\sigma_{(t+i\varepsilon-T)^{-1}}(x)f(x))dt\\
&=\lim_{\delta \rightarrow 0^{+}}\lim_{\varepsilon\rightarrow 0^{+}}\int\limits_{-\infty}^{\lambda+\delta}((t-i\varepsilon-T)^{-1}f(x)-(t+i\varepsilon-T)^{-1}f(x))dt\\
&=E_{\lambda}f(x),
\end{align*} where in the last line we have used the Stone theorem applied to $T.$ Now, the identity $E_{\lambda}f(x)=E_\lambda(x)f(x)$ implies 
\begin{align*}
    G(T)f(x)&=\int\limits_{-\infty}^{\infty}G(\lambda)dE_\lambda f(x)=\int\limits_{-\infty}^{\infty}G(\lambda)dE_\lambda(x) f(x)=G(\sigma_T(x)) f(x).
\end{align*}
Thus, we finish the proof.
\end{proof}
\begin{remark}
Let us complement the previous theorem by observing that the relation 
\begin{equation}
E_{\lambda}f(x)=E_\lambda(x)f(x),\,\,f\in C^\infty(G),
\end{equation} holds true for all $x\in G.$
\end{remark}

\section{The index  of  operators on compact Lie groups}\label{MAINTTT}

In this section we prove our main result. Since the prototype of Fredholm operators are elliptic operators, we  classify such condition in terms of the operator-valued quantization.

\subsection{Ellipticity in terms of the operator-valued quantization} In terms of the representation theory of a compact Lie group and the notion of operator valued symbol, the ellipticity of operators can be characterized as follows. 

\begin{theorem}\label{elipticidadlie**}
Let $G$ be a compact Lie group and $e_{G}$ its identity element. An operator $A\in \Psi^{m}(G)$ with operator valued symbol $\sigma_A:G\rightarrow \mathscr{B}(C^\infty(G))$  is elliptic if and only if the   matrix-valued function $\sigma_{A}(x)\xi(e_{G})$  is an invertible matrix for all but finitely many $[\xi]\in \widehat{G},$ and for all such $\xi$ and $x\in G$ satisfies
$$\Vert [\sigma_{A}(x)\xi(e_G)]^{-1}\Vert_{op} \leq C \langle \xi\rangle^{-m}. $$
Thus both statements are equivalent to the existence of $B\in \Psi^{-m}(G)$ such that $R_{1}=I-AB$ and $R_{2}=I-BA$ are smoothing. This means that $R_{i}\in \Psi^{-\infty}(G):=\cap_{m}\Psi^{m}(G), $ for $i=1,2.$ 
\end{theorem}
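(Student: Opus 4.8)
The plan is to deduce the stated equivalence from the matrix-valued ellipticity criterion of Theorem \ref{elipticidadlie}; the one genuinely new ingredient is a pointwise dictionary relating the operator-valued symbol $\sigma_A(x)$ to the matrix-valued symbol $\sigma_A(x,\xi)$. Writing $[\sigma_A(x)\xi](e_G)$ for the $d_\xi\times d_\xi$ matrix whose $(i,j)$ entry is $(\sigma_A(x)\xi_{ij})(e_G)$ (that is, applying the operator $\sigma_A(x)\in\mathscr{B}(C^\infty(G))$ to each matrix coefficient $\xi_{ij}$ and evaluating at the identity), I would first prove
\begin{equation}\label{dictionary}
[\sigma_A(x)\xi](e_G)=\xi(x)\,\sigma_A(x,\xi)\,\xi(x)^{*},\qquad x\in G,\ [\xi]\in\widehat{G}.
\end{equation}
Everything else in the statement then reduces to the corresponding fact for $\sigma_A(x,\xi)$, precisely because $\xi(x)$ is unitary.

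To establish \eqref{dictionary} I would unwind the definition $\sigma_A(x)=r(R_A(x))$, so that $\sigma_A(x)\phi=\phi\ast R_A(x)$, and use that the group Fourier transform converts this right convolution into left multiplication by $\widehat{R_A(x)}(\xi)$. Taking $\phi=\xi_{ij}$, computing $\widehat{\xi_{ij}}(\xi)$ from the Peter--Weyl orthogonality relations and then evaluating the Fourier inversion formula at $e_G$, the sum over $\widehat{G}$ collapses to a single term and yields $[\sigma_A(x)\xi](e_G)=\widehat{R_A(x)}(\xi)$. It then remains to substitute the definition $R_A(x,w)=K_A(x,w^{-1}x)$ into $\widehat{R_A(x)}(\xi)=\int_G R_A(x,w)\xi(w)^{*}\,dw$ and perform the measure-preserving change of variables $w\mapsto xv^{-1}$; recognising $\int_G K_A(x,v)\xi(v)\,dv=(A\xi)(x)=\xi(x)\sigma_A(x,\xi)$ produces \eqref{dictionary}. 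The only delicate point, and the step where I expect the real work to sit, is that $K_A$ is a distribution rather than a function, so these integral manipulations must be read as distributional pairings against the smooth matrix coefficients $\xi_{ij}$; no new analytic difficulty arises, but the bookkeeping must be carried out carefully.

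With \eqref{dictionary} in hand the proof concludes quickly. Since $\xi(x)$ is unitary, $[\sigma_A(x)\xi](e_G)$ is invertible exactly when $\sigma_A(x,\xi)$ is, and in that case $[\sigma_A(x)\xi(e_G)]^{-1}=\xi(x)\,\sigma_A(x,\xi)^{-1}\,\xi(x)^{*}$, whence $\Vert[\sigma_A(x)\xi(e_G)]^{-1}\Vert_{op}=\Vert\sigma_A(x,\xi)^{-1}\Vert_{op}$ for every $x$ and every such $[\xi]$. Consequently the invertibility of $\sigma_A(x)\xi(e_G)$ for all but finitely many $[\xi]$, together with the estimate $\Vert[\sigma_A(x)\xi(e_G)]^{-1}\Vert_{op}\leq C\langle\xi\rangle^{-m}$, is equivalent to the matrix-valued condition appearing in Theorem \ref{elipticidadlie}. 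Applying that theorem identifies both conditions with the ellipticity of $A$ and with the existence of a parametrix $B\in\Psi^{-m}(G)$ such that $R_1=I-AB$ and $R_2=I-BA$ lie in $\Psi^{-\infty}(G)$, which is the desired conclusion.
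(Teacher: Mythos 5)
Your proposal is correct and has the same skeleton as the paper's proof: both reduce the operator-valued criterion to the matrix-valued one of Theorem \ref{elipticidadlie} via a pointwise identity between $(\sigma_A(x)\xi)(e_G)$ and $\sigma_A(x,\xi)$, and then transfer invertibility and the bound on the inverse across that identity. The difference lies in how the identity is obtained and in its exact form. The paper simply cites Theorem 10.11.16 of Ruzhansky--Turunen, which gives $\xi(y)^{*}(\sigma_A(x)\xi)(y)=\sigma_A(x,\xi)$ for \emph{every} $y$, hence $(\sigma_A(x)\xi)(e_G)=\sigma_A(x,\xi)$ with no conjugation; this reflects the fact that $\sigma_A(x)=r(R_A(x))$ is a left-invariant (right-convolution) operator, so its matrix symbol is the base-point-independent matrix $\widehat{R_A(x)}(\xi)$. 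You rederive the dictionary from scratch, and your first step $[\sigma_A(x)\xi](e_G)=\widehat{R_A(x)}(\xi)$ (Peter--Weyl collapse plus inversion at $e_G$) is exactly right; the conjugated form $\xi(x)\,\sigma_A(x,\xi)\,\xi(x)^{*}$ you then get comes from taking the paper's displayed kernel relation $R_A(x,y)=K(x,y^{-1}x)$ at face value, whereas in the Ruzhansky--Turunen convention the relation reads $K_A(x,y)=R_A(x,y^{-1}x)$, i.e.\ $R_A(x,w)=K_A(x,xw^{-1})$, and the analogous change of variables then yields $\widehat{R_A(x)}(\xi)=\sigma_A(x,\xi)$ on the nose. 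For the theorem at hand the discrepancy is harmless, exactly as you argue, since conjugation by the unitary $\xi(x)$ preserves invertibility and $\Vert\cdot\Vert_{op}$; but it is worth knowing that the unconjugated identity is the correct one, since in the remark following the theorem the derivatives $\partial_x^{\alpha}$ act on $\sigma_A(x)\xi(e_G)$, and an extraneous $x$-dependent factor $\xi(x)$ would spoil those symbol estimates. So your self-contained derivation is a legitimate substitute for the citation, once the kernel convention is fixed as above.
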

\begin{proof}
Let us denote by $B(x,\xi)$ the matrix-valued symbol associated to $A.$ This means that $B:G\times \widehat{G}\rightarrow \cup_{[\xi]\in\widehat{G}}(\mathbb{C}^{d_\xi})$ satisfies $B(x,\xi)=\xi(x)^*(A\xi)(x).$ Theorem 10.11.16 in \cite{Ruz} gives the identity $B(x,\xi)=\sigma_{\sigma_A(x)}(y,\xi):=\xi(y)^*(\sigma_{A}(x)\xi)(y),$ for all $y\in G.$ In particular, if $y=e_{G}$ is the identity element in $G,$
\begin{eqnarray}
\sigma_A(x)\xi(e_G)=B(x,\xi),\,x\in G,\,\,[\xi]\in \widehat{G}.
\end{eqnarray} Thus, from Theorem \ref{elipticidadlie}
we finish the proof.
\end{proof}
\begin{remark}
A similar analysis as in the previous result gives the following characterization of H\"ormander classes in terms of operator-valued symbols. in fact,   $A\in \Psi^m(G)$ if and only if
\begin{equation}
\Vert \partial_{x}^{\alpha}\mathbb{D}^{\beta}(\sigma_A(x)\xi(e_G))\Vert_{op} \leq C_{\alpha,\beta} \langle \xi\rangle^{m-|\beta|},
\end{equation} for all $\alpha,\beta\in\mathbb{N}^n.$ It is well know that pseudo-differential operators in $\Psi^0(G)$ are bounded operators on $L^2(G)$ (see \cite{Hor2}).
\end{remark}
\subsection{Index formulae for elliptic operators}
Now we prove our main results. We start with the following theorem.
\begin{theorem}\label{THEOREM!INDEX}
Let $G$ be a compact Lie group and $A\in \Psi^0(G)$ be an elliptic operator. Then the analytical index of $A$ is given by
\begin{equation}\label{CardonaDuvanIF'}
\textnormal{ind}(A)=\int_{G}\mu_\gamma(g)dg,\,\,
\end{equation} where \begin{equation}
\mu_\gamma(g):=\exp(-\gamma \sigma_{A^*}(g)\sigma_A(g))\delta_g(g)-\exp({-\gamma \sigma_{A}(g)\sigma_{A^*}(g)})\delta_g(g),
\end{equation}
for all $\gamma>0,$ where $\delta_g$ is the Dirac point mass at $g\in G.$
\end{theorem}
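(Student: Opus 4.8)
The plan is to read the McKean--Singer identity of Lemma \ref{heatapproach} through the operator-valued functional calculus of Theorem \ref{fucntionalformula}. First I would record that, since $A\in\Psi^0(G)$ is elliptic, Theorem \ref{elipticidadlie**} produces a parametrix and hence shows that $A$ extends to a Fredholm operator on $L^2(G)$; thus $\textnormal{ind}(A)$ is well defined and, taking $T=A$ and $H_1=H_2=L^2(G)$ in Lemma \ref{heatapproach}, one obtains
\begin{equation*}
\textnormal{ind}(A)=\textnormal{tr}(e^{-\gamma A^*A})-\textnormal{tr}(e^{-\gamma AA^*}),\qquad\gamma>0.
\end{equation*}
Everything then reduces to rewriting each heat trace as the integral over $G$ of the diagonal of the corresponding Schwartz kernel, and to expressing that diagonal through the operator-valued symbols of $A$ and $A^{*}$.

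For the symbolic computation I would use that $T=A^{*}A$ and $T=AA^{*}$ are exactly the self-adjoint examples singled out before Theorem \ref{fucntionalformula}. The composition formula (Proposition \ref{composition}) gives $\sigma_{A^{*}A}(x)=\sigma_{A^{*}}(x)\sigma_A(x)$ and $\sigma_{AA^{*}}(x)=\sigma_A(x)\sigma_{A^{*}}(x)$, and then the functional calculus of Theorem \ref{fucntionalformula} applied with $G(\lambda)=e^{-\gamma\lambda}$ identifies the symbols of the heat operators as
\begin{equation*}
\sigma_{e^{-\gamma A^*A}}(x)=\exp(-\gamma\,\sigma_{A^{*}}(x)\sigma_A(x)),\qquad \sigma_{e^{-\gamma AA^*}}(x)=\exp(-\gamma\,\sigma_A(x)\sigma_{A^{*}}(x)).
\end{equation*}
In other words, Theorem \ref{fucntionalformula} (together with Remark \ref{remarkpotencia}) transfers the scalar spectral calculus of $A^{*}A$ into a pointwise-in-$x$ operator calculus for the symbol $\sigma_{A^{*}A}(x)$, which is precisely the exponential appearing in $\mu_\gamma$.

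For the trace side I would use $\textnormal{tr}(B)=\int_G K_B(g,g)\,dg$ for the Schwartz kernel $K_B$ of a trace class operator $B$, together with the identity $Bf(x)=[\sigma_B(x)f](x)$ valid for any continuous operator (the same identity underlying Theorem \ref{fucntionalformula}, which follows from Fourier inversion and $\sigma_B(x)=r(R_B(x))$). Extending this to $f=\delta_g$ and setting $x=g$, and using $B\delta_g(x)=K_B(x,g)$, yields $K_B(g,g)=[\sigma_B(g)\delta_g](g)$. Feeding in the two symbol identities of the previous step gives
\begin{equation*}
\textnormal{ind}(A)=\int_G\big([\sigma_{e^{-\gamma A^*A}}(g)\delta_g](g)-[\sigma_{e^{-\gamma AA^*}}(g)\delta_g](g)\big)\,dg=\int_G\mu_\gamma(g)\,dg,
\end{equation*}
which is the claimed formula for every $\gamma>0$.

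The delicate point is analytic rather than algebraic. For $A$ of order zero the operators $A^{*}A$ and $AA^{*}$ have essential spectrum bounded away from $0$, so neither $e^{-\gamma A^*A}$ nor $e^{-\gamma AA^*}$ is individually trace class, and each diagonal density $[\sigma_{e^{-\gamma A^*A}}(g)\delta_g](g)$ is a priori only a distribution. What must be checked is that their \emph{difference} is the kernel of a smoothing operator: this is forced by the fact, exploited in the proof of Lemma \ref{heatapproach} through the pairing $\phi\mapsto A\phi$, that $A^{*}A$ and $AA^{*}$ share the same nonzero spectrum with multiplicities, so that the divergent contributions cancel and $\mu_\gamma$ is a genuine smooth density whose integral converges; this is the sense in which the right-hand side is understood distributionally. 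I expect establishing this cancellation (equivalently, verifying the trace class hypothesis of Lemma \ref{heatapproach} at the level of the difference) to be the main obstacle; once it is granted, the symbolic identities above close the argument, and the $\gamma$-independence of the right-hand side is automatic from the McKean--Singer identity.
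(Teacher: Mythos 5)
Your algebraic skeleton coincides with the paper's proof of Theorem \ref{THEOREM!INDEX}: reduce to the McKean--Singer identity of Lemma \ref{heatapproach}, identify $\sigma_{e^{-\gamma A^*A}}(x)=\exp(-\gamma\,\sigma_{A^*}(x)\sigma_A(x))$ via the composition formula, and recover the diagonal of the kernel through the distributional identity $[\sigma_B(g)\delta_g](g)=R_B(g,e_G)=K_B(g,g)$ --- the paper performs exactly these three steps. Your only deviation is to obtain the heat symbol from Theorem \ref{fucntionalformula} with $G(\lambda)=e^{-\gamma\lambda}$; the paper explicitly notes this alternative but, since $A\in\Psi^0(G)$ makes $A$ and each $\sigma_A(x)$ bounded on $L^2(G)$, it instead expands the exponential series $e^{-\gamma A^*A}=\sum_{k}(-\gamma)^k(A^*A)^k/k!$ and applies Proposition \ref{composition} and Remark \ref{remarkpotencia} termwise, reserving the functional-calculus route for the positive-order case in Theorem \ref{Indexformulagamma}. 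Up to this point your proposal is a faithful reconstruction.

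The gap is the one you yourself flag in your final paragraph, and the mechanism you sketch for closing it does not work. You are right that for order-zero elliptic $A$ the operators $e^{-\gamma A^*A}$ and $e^{-\gamma AA^*}$ need not be trace class (their essential spectra stay away from $0$), so the hypotheses of Lemma \ref{heatapproach} are not automatic; the paper's proof sidesteps this by asserting that $A^*A$, $AA^*$, $\sigma_A(x)^*\sigma_A(x)$, etc., ``have discrete spectrum,'' which is unjustified at order zero, so you will not find the missing verification there either. But your claim that the matching of the nonzero spectra of $A^*A$ and $AA^*$ (via $\phi\mapsto A\phi$) forces the divergences to cancel, making the difference smoothing with trace equal to the index at each fixed $\gamma$, is false: the cancellation in Lemma \ref{heatapproach} requires both heat traces to be individually finite. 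Concretely, on $G=\mathbb{T}$ let $P_+$ be the Fourier multiplier with symbol $1$ for $\xi\geq 0$ and $0$ for $\xi\leq -1$ (its lattice differences are finitely supported, so $P_+\in\Psi^0(\mathbb{T})$ by the toroidal--Euclidean equivalence of \cite{RuzTur2010}), and set $A=P_+e^{i\theta}P_++(I-P_+)$. Then $A$ is elliptic in $\Psi^0(\mathbb{T})$ with $\textnormal{ind}(A)=-1$, while $A^*A=I$ and $AA^*=I-P_0$, where $P_0$ is the rank-one projection onto the constants. The nonzero spectra agree exactly, with multiplicity, yet
\begin{equation*}
e^{-\gamma A^*A}-e^{-\gamma AA^*}=(e^{-\gamma}-1)P_0,
\end{equation*}
which is rank one with smooth kernel but has trace $e^{-\gamma}-1\neq -1$ for every finite $\gamma$ (the index is recovered only as $\gamma\to\infty$). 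So trace-classness of the difference together with spectral matching still does not yield the fixed-$\gamma$ identity; to run your (and the paper's) argument one must either establish the trace-class hypothesis of Lemma \ref{heatapproach} for each heat operator separately --- which ellipticity at order zero does not provide --- or replace the fixed-$\gamma$ statement by a $\gamma\to\infty$ limit. As written, your proof is incomplete at exactly the step you anticipated, and the proposed repair cannot close it.
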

\begin{proof}
Let us assume that $A\in \Psi^0(G)$ is an elliptic operator. Let us denote for all $x\in G,$ by $B_{\sigma_A(x)}$ the matrix-valued symbol associated to $\sigma_A(x):C^\infty(G)\rightarrow C^\infty(G).$ If $B(x,\xi)$ is the matrix-valued symbol associated to $A,$ then Theorem 10.11.16 in \cite{Ruz} gives
\begin{equation}
B_{\sigma_A(x)}(y,\xi)=B(x,\xi),\,\,x,y\in G,[\xi]\in \widehat{G}.
\end{equation}
Consequently for every $x\in G,$ $\sigma_A(x)\in \Psi^0(G)$ is an elliptic operator. So, for  every $x\in G,$ $\sigma_A(x)$ extends to  a bounded and Fredholm operator  on $L^2(G)$ and the operators $A, A^*A, AA^*,$ $\sigma_A(x), \sigma_A(x)^*\sigma_A(x), \sigma_A(x)\sigma_A(x)^*,$ $e^{-\gamma A^*A},$ $e^{-\gamma AA^*},$ $e^{-\gamma\sigma_{A}(x)^* \sigma_{A}(x)  },$ and $e^{-\gamma\sigma_{A}(x) \sigma_{A}(x)^*  }$ have discrete spectrum. In order to compute the index of $A$ we need to compute the operator-valued symbol of the operators  $e^{-\gamma A^* A  }$ and $e^{-\gamma A {A}^*  }.$ Although this can be done by using the operator-valued functional calculus developed in the previous section, we give a more elementary construction.  For this we will use that the
exponential formula $e^T=\sum_{k=0}^{\infty}\frac{1}{k!}T^{k}$ holds true for a bounded operator $T,$ on a Hilbert space $H,$ where we have assumed that $T$ is self-adjoint and $e^T$ is defined by the functional calculus. From Proposition \ref{composition} and Remark \ref{remarkpotencia},   we have for $f\in C^\infty(G)$ and $\gamma>0,$ the identity $e^{-\gamma A^*A}f(x)=e^{-\gamma\sigma_{A^*}(x)\sigma_A(x)}f(x),$ in fact
\begin{align*}e^{-\gamma A^*A}f(x) &=\left(\sum_{k=0}^\infty (-\gamma)^k/k!(A^*A)^k\right)f(x)
=\sum_{k=0}^\infty (-\gamma)^k/k!((A^*A)^k f)(x)\\
& =\sum_{k=0}^\infty (-\gamma)^k/k!\sigma_{(A^*A)^k}(x)f(x)=\sum_{k=0}^\infty (-\gamma)^k/k!(\sigma_{(A^*A)}(x))^k f(x)\\
&=\sum_{k=0}^\infty (-\gamma)^k/k!(\sigma_{A^*}(x)\sigma_A(x))^k(x)f(x)\\
&=e^{-\gamma\sigma_{A^*}(x)\sigma_A(x)}f(x),\,\,x\in G,
\end{align*} where we have used that $A$ and $\sigma_A(x)$ are bounded operators on $L^2(G)$ justifying so the convergence computations with the exponentials operators. So,   the operator valued symbol associated to $e^{-\gamma A^*A}$ is given by $\sigma_{e^{-\gamma A^*A}}(x)=e^{-\gamma \sigma_{A^*}(x)\sigma_A(x)}.$ On the other hand, let us denote $K_{e^{-\gamma A^*A}}$ to the distributional kernel associated to $e^{-\gamma A^*A}.$ Then,
\begin{eqnarray}
\textnormal{tr}(e^{-\gamma A^*A})=\int_{G}K_{e^{-\gamma A^*A}}(g,g)dg.
\end{eqnarray} Because $K_{\gamma}(g,g)=R_{e^{-\gamma A^*A}}(g,e_G),$ for every $g\in G,$ (here $e_G$ is the identity element of $G$) we have the distributional  identity
\begin{equation}
e^{-\gamma A^*A}\delta_g(g)=\sigma_{  e^{-\gamma A^*A}}(g)\delta_g(g)=\int_{G}\delta_g(y)R_{e^{-\gamma A^*A}}(y^{-1}g)dy=R_{ e^{-\gamma A^*A} }(g,e_G).
\end{equation} Taking into account the first part of the proof, we deduce
\begin{eqnarray}
e^{-\gamma \sigma_{A^*}(g)\sigma_A(g)}\delta_g(g)=R_{ e^{-\gamma A^*A} }(g,e_G)
\end{eqnarray} and consequently
\begin{equation*}
\textnormal{tr}(e^{-\gamma A^*A})=\int_{G}{}{e^{-\gamma \sigma_{A^*}(g)\sigma_A(g)}\delta_g(g)}dg.
\end{equation*} Similarly, an analogous analysis applied to $A^*$  instead of $A$ gives
\begin{equation*}
\textnormal{tr}(e^{-\gamma AA^*})=\int_{G}{}{e^{-\gamma \sigma_{A}(g)\sigma_{A^*}(g)}\delta_g(g)}dg.
\end{equation*} So, by Lemma \eqref{heatapproach} we have

\begin{equation}
\textnormal{ind}(A)=\int_{G} ({}{e^{-\gamma \sigma_{A^*}(g)\sigma_A(g)}\delta_g(g)}-{}{e^{-\gamma \sigma_{A}(g)\sigma_{A^*}(g)}\delta_g(g)})dg,
\end{equation} where in the last line we have used Lemma \ref{heatapproach}. So, we finish the proof.
\end{proof}

\begin{remark}
The main advantage here is Proposition \ref{composition} for the composition of operators, where we have a closed formula,  instead of the usual global calculus where is used the notion of asymptotic expansions.
\end{remark}
\begin{remark}
If $A\in \Psi^0(G)$ is a left invariant operator on a compact Lie group, its operator valued symbol is  the constant mapping $\sigma_A(x)=A,$ $x\in G,$ in this case, $A$ is a right convolution operator and from \eqref{CardonaDuvanIF'}, $\textnormal{ind}(A)=0.$ Operators on compact Lie groups with non vanishing index can be found in \cite[Chapter 4]{Ruz}.
\end{remark}

Now, we prove an index theorem for operators of arbitrary order.

\begin{theorem}\label{generalorder}
Let $G$ be a compact Lie group, $m\in\mathbb{R} $ and $A\in \Psi^m(G)$ be an elliptic operator. Then the analytical index of $A$ is given by
\begin{equation}\label{CardonaDuvanIF'''}
\textnormal{ind}(A)=\int_{G}\mu_{\gamma,m}(g)dg,\,\,
\end{equation} where \begin{equation}
\mu_{\gamma,m}(g):=\exp(-\gamma \sigma_{A^*}(g)\Lambda_{-2m}\sigma_A(g))\delta_g(g)-\exp({-\gamma \Lambda_{-m}\sigma_{A}(g)\sigma_{A^*}(g)\Lambda_{-m}})\delta_g(g),
\end{equation}
for all $\gamma>0,$
 where $\delta_g$ is the Dirac point mass at $g\in G.$
\end{theorem}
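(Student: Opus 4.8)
The plan is to reduce the general-order statement to the order-zero case already established in Theorem \ref{THEOREM!INDEX}. First I would introduce the operator
\[
\widehat{A}:=\Lambda_{-m}A,
\]
where $\Lambda_{-m}\in\Psi^{-m}(G)$ is the left-invariant operator whose matrix-valued symbol is $\langle\xi\rangle^{-m}I_{\xi}$. Since $A\in\Psi^{m}(G)$ and $\Lambda_{-m}\in\Psi^{-m}(G)$ are both elliptic, the composite $\widehat{A}$ belongs to $\Psi^{0}(G)$ and is again elliptic: by Theorem \ref{elipticidadlie} the product of the two symbols is invertible for all but finitely many $[\xi]$ and satisfies the order-zero bound. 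Thus $\widehat{A}$ falls within the scope of Theorem \ref{THEOREM!INDEX}.

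Next I would check that the passage from $A$ to $\widehat{A}$ leaves the index unchanged. Regarding $A$ as a Fredholm operator $H^{s}(G)\to H^{s-m}(G)$ and $\Lambda_{-m}$ as the isomorphism $H^{s-m}(G)\to H^{s}(G)$, the multiplicativity of the Fredholm index under composition gives $\textnormal{ind}(\widehat{A})=\textnormal{ind}(\Lambda_{-m})+\textnormal{ind}(A)=\textnormal{ind}(A)$, because $\Lambda_{-m}$ is invertible and hence has index zero. Moreover, as recorded after Definition \ref{sov}, the index of $\widehat{A}$ does not depend on the chosen Sobolev scale, so it coincides with the analytic index of $\widehat{A}$ as an operator on $C^{\infty}(G)$.

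I would then apply Theorem \ref{THEOREM!INDEX} to $\widehat{A}$ and translate the resulting operator-valued symbols back into those of $A$ and $A^{*}$. Since $\Lambda_{-m}$ is left-invariant its operator-valued symbol is the constant map $\sigma_{\Lambda_{-m}}(g)=\Lambda_{-m}$, and since $\Lambda_{-m}$ is a real function of the self-adjoint Casimir operator it is self-adjoint, whence $\widehat{A}^{*}=A^{*}\Lambda_{-m}$. Proposition \ref{composition} yields $\sigma_{\widehat{A}}(g)=\Lambda_{-m}\sigma_{A}(g)$ and $\sigma_{\widehat{A}^{*}}(g)=\sigma_{A^{*}}(g)\Lambda_{-m}$, so that, using $\Lambda_{-m}\Lambda_{-m}=\Lambda_{-2m}$,
\[
\sigma_{\widehat{A}^{*}}(g)\sigma_{\widehat{A}}(g)=\sigma_{A^{*}}(g)\Lambda_{-2m}\sigma_{A}(g),
\qquad
\sigma_{\widehat{A}}(g)\sigma_{\widehat{A}^{*}}(g)=\Lambda_{-m}\sigma_{A}(g)\sigma_{A^{*}}(g)\Lambda_{-m}.
\]
Substituting these two exponents into the order-zero formula of Theorem \ref{THEOREM!INDEX} produces exactly the density $\mu_{\gamma,m}$ in the statement, completing the proof.

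The step I expect to be the main obstacle is the symbolic bookkeeping around the adjoint: one must verify, as genuine identities of operators on $C^{\infty}(G)$ rather than merely formally, that $\sigma_{\Lambda_{-m}}(g)=\Lambda_{-m}$, that $(\Lambda_{-m}A)^{*}=A^{*}\Lambda_{-m}$, and that the composition formula may be iterated on these products. The analytic hypotheses needed for the McKean--Singer identity, namely that $e^{-\gamma\widehat{A}^{*}\widehat{A}}$ and $e^{-\gamma\widehat{A}\widehat{A}^{*}}$ are trace class with discrete spectrum and that the relevant exponential series converge, are already guaranteed for the order-zero operator $\widehat{A}$ by Theorem \ref{THEOREM!INDEX}, so no new analytic input beyond that theorem is required.
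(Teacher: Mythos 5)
Your proposal is correct and takes essentially the same route as the paper: the paper likewise sets $E=\Lambda_{-m}A\in\Psi^{0}(G)$, uses the logarithmic property of the index together with the self-adjointness of $\Lambda_{-m}$ to get $\textnormal{ind}(A)=\textnormal{ind}(E)$, and substitutes $\sigma_{E}(g)=\Lambda_{-m}\sigma_{A}(g)$ and $\sigma_{E^{*}}(g)=\sigma_{A^{*}}(g)\Lambda_{-m}$ into Theorem \ref{THEOREM!INDEX}. If anything your bookkeeping is more careful than the paper's, which misprints the adjoint as $E^{*}=A\Lambda_{-m}$ with symbol $\sigma_{A}(x)\Lambda_{-m}$, whereas your $E^{*}=A^{*}\Lambda_{-m}$, $\sigma_{E^{*}}(g)=\sigma_{A^{*}}(g)\Lambda_{-m}$ is the version actually consistent with the stated density $\mu_{\gamma,m}$.
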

\begin{proof}
For the proof we apply Theorem \ref{THEOREM!INDEX} to the operator $E=\Lambda_{-m}A\in \Psi^0(G).$ In fact, by using that $\Lambda_{m}$ is self-adjoint, from the logarithmic property of the index we have
$$ \textnormal{ind}(A)=\textnormal{ind}(\Lambda_{m})+\textnormal{ind}(E)=\textnormal{ind}(E). $$
Because $\Lambda_{-m}$ is left invariant, $\sigma_{ \Lambda_{-m} }(x)=\Lambda_{-m},$  and $\sigma_E(x)=\Lambda_{-m}\sigma_A(x).$ Now, $E^*=A\Lambda_{-m},$ $\sigma_{E^*}(x)=\sigma_A(x)\Lambda_{-m}$ and  we have
\begin{equation}\label{CardonaDuvanIF''''''}
\textnormal{ind}(E)=\int_{G}\mu_\gamma(g)dg,\,\,
\end{equation} where \begin{equation}
\mu_{\gamma,m}(g):=\exp(-\gamma \sigma_{A^*}(g)\Lambda_{-2m}\sigma_A(g))\delta_g(g)-\exp({-\gamma \Lambda_{-m}\sigma_{A}(g)\sigma_{A^*}(g)\Lambda_{-m}})\delta_g(g),
\end{equation}
for all $\gamma>0.$ So, we finish the proof.
\end{proof} Now, we need some preliminary results in order to prove our third index theorem.
\begin{proposition}\label{closed}
Let $G$ be a compact Lie group. Every elliptic pseudo-differential operator $T:C^{\infty}(G)\rightarrow C^{\infty}(G)$ of order $m\geq 0$ extends to a closed operator $T$ on $L^2(G).$
\end{proposition}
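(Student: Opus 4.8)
The plan is to realize $T$ as an unbounded operator on $L^2(G)$ whose domain is the Sobolev space $H^m(G)$. Since $m\geq 0$ we have the continuous inclusion $H^m(G)\hookrightarrow H^0(G)=L^2(G)$, so this domain sits inside $L^2(G)$; it is dense because it contains $C^\infty(G)$, and on $C^\infty(G)$ it restricts to the original operator, so the resulting operator is a genuine extension of $T$. As recalled after Definition \ref{sov}, an elliptic $T\in\Psi^m(G)$ maps $H^s(G)$ boundedly into $H^{s-m}(G)$; in particular $T\colon H^m(G)\to L^2(G)$ is bounded, which will be used at the end to pass to the limit.

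The key ingredient is the elliptic a priori estimate, which I would extract from the parametrix. By Theorem \ref{elipticidadlie} (equivalently Theorem \ref{elipticidadlie**}) ellipticity yields $B\in\Psi^{-m}(G)$ with $BT=I-R_2$, where $R_2\in\Psi^{-\infty}(G)$ is smoothing. Since $B$ has order $-m$ it maps $L^2(G)=H^0(G)$ boundedly into $H^m(G)$, and since $R_2$ is smoothing it maps $L^2(G)$ boundedly into $H^m(G)$ (indeed into every $H^s(G)$). Writing $f=BTf+R_2f$ for $f\in H^m(G)$ and estimating both terms gives the inequality
\begin{equation}
\Vert f\Vert_{H^m}\leq C\bigl(\Vert Tf\Vert_{L^2}+\Vert f\Vert_{L^2}\bigr),\qquad f\in H^m(G),
\end{equation}
for a constant $C>0$ independent of $f$; that is, the graph norm of $T$ is equivalent to the $H^m$-norm on the domain.

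With this estimate in hand, closedness is immediate. Suppose $f_n\in H^m(G)$ with $f_n\to f$ in $L^2(G)$ and $Tf_n\to h$ in $L^2(G)$. Applying the estimate to the differences $f_n-f_k$ shows that $(f_n)$ is Cauchy in $H^m(G)$; by completeness $f_n\to g$ for some $g\in H^m(G)$. Since $H^m(G)\hookrightarrow L^2(G)$ continuously, the $H^m$-limit $g$ coincides with the $L^2$-limit $f$, so $f\in H^m(G)=\textnormal{Dom}(T)$. Finally, boundedness of $T\colon H^m(G)\to L^2(G)$ gives $Tf=\lim_n Tf_n=h$, which is exactly the statement that the graph of $T$ is closed.

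The one point requiring care, rather than a genuine obstacle, is the justification of the mapping properties used to derive the a priori estimate: that an operator of order $-m$ sends $L^2(G)$ into $H^m(G)$ and that a smoothing operator sends $L^2(G)$ into $H^m(G)$. Both follow directly from the definition of the Sobolev spaces in Definition \ref{sov} together with the composition calculus (Proposition \ref{composition} and the symbolic characterization of $\Psi^m(G)$); no new idea beyond standard elliptic regularity on compact manifolds is needed.
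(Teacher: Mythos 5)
Your proof is correct, but it follows a genuinely different route from the paper's. The paper never introduces the a priori estimate or fixes $H^m(G)$ as the domain: it takes a \emph{right} parametrix $S\in\Psi^{-m}(G)$ with $TS=I+R$, $R\in\Psi^{-\infty}(G)$, and argues directly at the level of $L^2$-limits --- from $f_n\to f$, $Tf_n\to g$ and the $L^2$-boundedness of the non-positive-order operators $S$, $ST$, $R$ and $RT$ it gets $Sg=STf$, hence $Tf+RTf=g+Rg$, and separately $RTf=Rg$, so the smoothing remainders cancel and $Tf=g$. You instead use the \emph{left} parametrix $BT=I-R_2$ to extract the elliptic estimate $\Vert f\Vert_{H^m}\leq C(\Vert Tf\Vert_{L^2}+\Vert f\Vert_{L^2})$ and then let the completeness of $H^m(G)$ do the work. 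Each approach has its merits: yours identifies the domain of the closed extension explicitly as $H^m(G)$ (equivalently, shows the graph norm is the $H^m$-norm), which makes the meaning of ``extends to a closed operator'' completely unambiguous, whereas the paper's argument stays entirely within $L^2$-boundedness of operators of order $\leq 0$ and implicitly works with the maximal (distributional) extension, at the cost of leaving the domain and the interpretation of $Tf$ for $f\in L^2$ somewhat tacit; by elliptic regularity (indeed by your own identity $f=BTf+R_2f$) the two extensions coincide. Your appeal to the Sobolev mapping property $\Psi^{-m}(G)\colon L^2(G)\to H^m(G)$ is standard and is essentially recalled in the paper after Definition \ref{sov}, though citing Proposition \ref{composition} for it is a slight mismatch --- the clean justification is that $\Lambda_{m}B\in\Psi^{0}(G)$ is $L^2$-bounded, which uses the H\"ormander-class composition calculus rather than the exact operator-valued composition formula.
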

\begin{proof}
Let us assume that $T\in \Psi^{m}(G)$ is an elliptic operator. We will show that $T$ is closed on $L^2(G).$ Let $f_{n}\rightarrow f$  and assume that $Tf_{n}\rightarrow g$ where the convergence is in the $L^{2}(G)-$norm. We will prove that $Tf=g.$ From the Theorem \ref{elipticidadlie} there exists $S\in \Psi^{-m}(G)$ such that $TS=I+R$ where $R\in \Psi^{-\infty}(G).$ Since operators in $\Psi^{r}(G)$ are bounded on $L^2(G)$ for $r\leq 0,$ we have that $STf_{n}\rightarrow Sg$ and $(ST)f_{n}\rightarrow (ST)f.$ Hence $Sg=STf.$ Notice that $g\in L^{2}(G)$ and $TS(g)=TST(f)=TSTf=Tf+RTf=g+Rg.$ On the other hand
$$ R(Tf)=RT(\lim_{n\rightarrow \infty} f_{n})= \lim_{n\rightarrow \infty} RTf_{n}= R(\lim_{n\rightarrow \infty} Tf_n)=Rg. $$
Now from the equality $Tf+RTf=g+Rg$ we deduce that $Tf=g.$
\end{proof}
The corresponding statement for trace class pseudo-differential operators is the following (for the proof, we follow the approach of the recent works by J. Delgado and M. Ruzhansky \cite{DR,DR1,DR3}).
\begin{theorem}\label{traceondiagonal}
Let $A$ be a pseudo-differential operator on $\Psi^{m}(G),$ $m< -\dim(G)$ with distributional kernel $K(x,y).$ Then $A$ is trace class on $L^2(G)$ and 
$$ \textnormal{tr}(A)=\int_{G}\sum_{[\xi]\in\hat{G}}d_{\xi}\textnormal{Tr}[\sigma_{A}(x,\xi)]dx, $$ where $\sigma_A(x,\xi)$ is the matrix-valued symbol of $A.$
\end{theorem}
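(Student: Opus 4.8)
The plan is to establish two things: that $A\in\Psi^m(G)$ with $m<-\dim(G)$ is trace class on $L^2(G)$, and that its trace admits the stated representation in terms of the matrix-valued symbol. I would begin with the trace class property. Since the kernel $K(x,y)$ of $A$ is obtained from a symbol satisfying $\Vert\partial_x^\alpha\mathbb{D}^\beta\sigma_A(x,\xi)\Vert_{op}\leq C_{\alpha,\beta}\langle\xi\rangle^{m-|\beta|}$, the condition $m<-\dim(G)$ forces $\langle\xi\rangle^{m}$ to be summable against the Plancherel weights $d_\xi$ over $\widehat{G}$ (this uses the Weyl-type growth of $d_\xi$ and $\lambda_{[\xi]}$ on a compact Lie group). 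Following the Delgado--Ruzhansky criterion cited in the statement, I would show that this summability makes the kernel $K$ continuous on $G\times G$, and in fact puts $A$ in the trace class; the cleanest route is to factor $A=A\Lambda_s\Lambda_{-s}$ for a suitable $s$ and exhibit $A$ as a product of two Hilbert--Schmidt operators, each controlled by a half-power of $\langle\xi\rangle^{m}$.

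Once trace class is known, the trace is computed by integrating the kernel on the diagonal,
\begin{equation}
\textnormal{tr}(A)=\int_{G}K(x,x)\,dx,
\end{equation}
which is legitimate precisely because $K$ is continuous. The remaining task is to rewrite $K(x,x)$ in terms of the matrix-valued symbol. From the quantization formula \eqref{mul}, writing $\widehat{f}(\xi)=\int_G f(y)\xi(y)^*\,dy$ and interchanging the (now absolutely convergent) sum and integral, one reads off the kernel as
\begin{equation}
K(x,y)=\sum_{[\xi]\in\widehat{G}}d_\xi\,\textnormal{Tr}\!\left[\xi(x)\sigma_A(x,\xi)\xi(y)^*\right].
\end{equation}
Setting $y=x$ and using the unitarity $\xi(x)^*\xi(x)=I_{d_\xi}$ together with the trace identity $\textnormal{Tr}[\xi(x)\sigma_A(x,\xi)\xi(x)^*]=\textnormal{Tr}[\sigma_A(x,\xi)]$, the $x$-dependence of the representation cancels and one obtains $K(x,x)=\sum_{[\xi]}d_\xi\,\textnormal{Tr}[\sigma_A(x,\xi)]$. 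Integrating over $G$ yields the claimed formula.

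The main obstacle is the first half: justifying that $A$ is genuinely trace class, not merely Hilbert--Schmidt, and that the diagonal restriction $K(x,x)$ is meaningful. This is where the hypothesis $m<-\dim(G)$ is essential and where I would lean most heavily on the Delgado--Ruzhansky machinery, verifying that the symbol estimates translate into the trace class membership (for instance, via a factorization or via the explicit summability $\sum_{[\xi]}d_\xi\langle\xi\rangle^{m}<\infty$ which encodes both the dimension growth and the decay of the symbol). Everything after that point is the interchange of summation and integration and the unitary cancellation, both of which are routine once absolute convergence is in hand.
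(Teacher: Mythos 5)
Your proposal is correct, and its second half --- computing $\textnormal{tr}(A)=\int_G K(x,x)\,dx$, expanding the kernel as $K(x,y)=\sum_{[\xi]}d_\xi\,\textnormal{Tr}[\xi(x)\sigma_A(x,\xi)\xi(y)^*]$, and cancelling the representations by unitarity at $y=x$ --- is exactly the paper's computation. The two analytic prerequisites, however, are handled differently. For trace class membership the paper simply cites Roe's book, whereas you give a self-contained argument: factoring $A=(A\Lambda_s)\Lambda_{-s}$ with $n/2<s<-m-n/2$ (a nonempty range precisely because $m<-n$) makes both factors Hilbert--Schmidt via the Plancherel identity $\Vert T\Vert_{HS}^2=\int_G\sum_{[\xi]}d_\xi\Vert\sigma_T(x,\xi)\Vert_{HS}^2\,dx$. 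This buys you more than the citation does: it also quietly legitimizes the diagonal-integration formula itself, since for a product of two Hilbert--Schmidt operators the trace is the integral of the composed kernels, a point the paper passes over with the bare proviso ``provided that $K$ is continuous on the diagonal.'' For the continuity of the diagonal, the paper takes a more roundabout route than you do: it bounds $\kappa(x)=K(x,x)$ pointwise and then shows that the \emph{derivatives} $\partial_j\kappa_\ell$ of the partial sums are Cauchy in $C(G)$, deducing differentiability (hence continuity) of $\kappa$; your direct argument --- the series is a uniformly absolutely convergent sum of continuous functions on all of $G\times G$ --- is simpler and fully sufficient for the purpose.

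One imprecision you should fix: the relevant summability is not $\sum_{[\xi]}d_\xi\langle\xi\rangle^{m}<\infty$, as you write twice, but $\sum_{[\xi]}d_\xi^{2}\langle\xi\rangle^{m}<\infty$. The diagonal estimate uses $|\textnormal{Tr}[\sigma_A(x,\xi)]|\le d_\xi\Vert\sigma_A(x,\xi)\Vert_{op}\lesssim d_\xi\langle\xi\rangle^{m}$, so each term $d_\xi\,\textnormal{Tr}[\sigma_A(x,\xi)]$ of the series is dominated by $d_\xi^{2}\langle\xi\rangle^{m}$; since the eigenvalue $\lambda_{[\xi]}$ of the Laplacian occurs with multiplicity $d_\xi^{2}$, the Weyl law shows that $\sum_{[\xi]}d_\xi^{2}\langle\xi\rangle^{m}$ converges exactly when $m<-\dim G$, which is the estimate the paper actually runs and the one your Hilbert--Schmidt factorization also requires. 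As stated, your bound with a single factor of $d_\xi$ does not dominate the series you need, though the repair is immediate and does not affect the architecture of your argument.
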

\begin{proof}
It is well known that if $A$ is a pseudo-differential operator of order less that $-n=-\dim( G),$  then $A$ is trace class (see \cite{Roe}). Now, the trace $\textnormal{Tr}(A)$ of $A$ is given by $\textnormal{Tr}(A)=\int_{G}K(x,x)dx$ provided that $K(x,y)$ will be  a continuous function on the diagonal. If we assume the boundedness of $x\mapsto K(x,x),$ since, in the case of compact Lie groups we have $K(x,y)=\sum_{[\xi]\in\widehat{G}}d_{\xi}\textnormal{Tr}(\xi(x)\sigma(x,y)\xi(y)^{^*}),$ then, $K(x,x)=\sum_{[\xi]\in\hat{G}}d_{\xi}\textnormal{Tr}[\sigma_{A}(x,\xi)],$ and we could end the proof. So, we only need to show the boundedness of $\kappa(\cdot)=K(\cdot,\cdot).$ 
Let us note that $\kappa$ is a bounded function on $G.$ Indeed, because $A\in \Psi^{m}(G),$ if $I_{d_\xi}$ denotes the identity matrix on $\mathbb{C}^{d_\xi \times d_\xi},$  we have
\begin{align*}
    |\kappa(x)| &\leq \sum_{ [\xi]\in\widehat{G}}d_{\xi}|\textnormal{Tr}[\sigma_{A}(x,\xi)]| = \sum_{   [\xi]\in\widehat{G} }d_{\xi}|\textnormal{Tr}[\sigma_{A}(x,\xi) I_{d_\xi}]|\\
    &\leq  \sum_{   [\xi]\in\widehat{G} }d_{\xi}\Vert \sigma_{A}(x,\xi) \Vert_{\textnormal{HS}}\Vert I_{d_\xi}\Vert_{ \textnormal{HS}}
    =  \sum_{ [\xi]\in\widehat{G}   } d_{\xi}\Vert \sigma_{A}(x,\xi) I_{d_\xi} \Vert_{\textnormal{HS}}\times d_\xi^{\frac{1}{2}}\\
    &\leq \sum_{  [\xi]\in\widehat{G}  } d_{\xi}\Vert  \sigma_{A}(x,\xi) \Vert_{op}\Vert I_{d_\xi} \Vert_{\textnormal{HS}}\times d_\xi^{\frac{1}{2}}=\sum_{  [\xi]\in\widehat{G}  } d_{\xi}^2\Vert  \sigma_{A}(x,\xi)\Vert_{op}\\
    &\lesssim  \sum_{  [\xi]\in\widehat{G}  } d_{\xi}^2\langle \xi\rangle^{m}<\infty,
\end{align*} where we have used that $m<-n$ in order to conclude that the last series converges. So, for  every $\ell\in\mathbb{N},$ let us define the function
\begin{equation}
    \kappa_{\ell}(x):= \sum_{\langle \xi\rangle \leq \ell}d_{\xi}\textnormal{Tr}[\sigma_{A}(x,\xi)].
\end{equation} 
For every $x\in G,$ $|\kappa(x)|<\infty,$ and the sequence of derivable functions $\kappa_\ell,$ $\ell\in\mathbb{N},$ converges pointwise to $\kappa.$  Let
$(Y_{j})_{j=1}^{\text{dim}(G)}$ be a basis for the Lie algebra $\mathfrak{g}$ of $G$, and let $\partial_{j}$ be the left-invariant vector fields corresponding to $Y_j.$ If we assume that  the sequence of continuous functions $\partial_j\kappa_\ell$ converges uniformly to some continuous function $\kappa'$ on $G$ (that is compact) then $\kappa$ is derivable  and $\kappa'=\partial_j\kappa,$ (consequently we could prove the continuity of $\kappa$). So, we need to prove the existence of $\kappa'.$ If $\ell<\ell',$ then 
\begin{equation}
\Vert\partial_j \kappa_{\ell}-\partial_j\kappa_{\ell'}\Vert_{L^{\infty}(G)}\leq  \sum_{  \ell< \langle \xi\rangle \leq \ell' }d_{\xi}|\textnormal{Tr}[\partial_j\sigma_{A}(x,\xi)]|\lesssim \sum_{  \ell<\langle \xi\rangle \leq \ell'  } d_{\xi}^2\langle \xi\rangle^{m}\rightarrow0 , \textnormal{ as }\ell\rightarrow\infty.
\end{equation}So, the sequence of continuous functions $\partial_j\kappa_\ell$ is a Cauchy sequence on  $C(G)$ and we end the proof taking into account that $(C(G),\Vert \cdot \Vert_{L^\infty(G)})$ is a complete normed space.
\end{proof} 

We end this section with the following result.
\begin{theorem}\label{Indexformulagamma}
Let us consider $m\geq 0$ and let $A\in \Psi^m(G)$ be an elliptic operator. Then the index of $A$ is given by
\begin{equation}
\textnormal{ind}(A)=\int_{G}\sum_{[\xi]\in \widehat{G}}d_\xi\textnormal{Tr}((\mu_\gamma(g)\xi)(e_G))\,dg,
\end{equation}
where
\begin{equation}
(\mu_\gamma(g)\xi)(e_G)=(\exp(-\gamma \sigma_{A^*}(g)\sigma_{A}(g))\xi)(e_{G})-(\exp(-\gamma \sigma_{A}(g) \sigma_{A^*}(g) )\xi)(e_G),
\end{equation} for all $\gamma>0.$
\end{theorem}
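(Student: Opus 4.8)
The plan is to reduce the computation of $\textnormal{ind}(A)$ to the diagonal trace formula of Theorem \ref{traceondiagonal} by way of the McKean--Singer lemma, reading the matrix-valued symbols of the heat operators off the operator-valued functional calculus of Section \ref{Operator-valued fucntional calculus}. Since $A\in\Psi^m(G)$ is elliptic with $m\geq 0$, Proposition \ref{closed} shows that $A$ extends to a closed, densely defined operator on $L^2(G)$, and ellipticity makes it Fredholm. The operators $A^*A$ and $AA^*$ are nonnegative, self-adjoint and elliptic of order $2m$, hence have discrete spectrum; for $m>0$ the associated heat operators $e^{-\gamma A^*A}$ and $e^{-\gamma AA^*}$ are smoothing, so that they lie in $\Psi^{-\infty}(G)$ and in particular are trace class. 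Lemma \ref{heatapproach} then gives
\[
\textnormal{ind}(A)=\textnormal{tr}(e^{-\gamma A^*A})-\textnormal{tr}(e^{-\gamma AA^*}),\qquad \gamma>0.
\]

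Next I would compute the symbols of the two heat operators. The operator-valued symbol $\sigma_{A^*A}(x)=\sigma_{A^*}(x)\sigma_A(x)=\sigma_A(x)^*\sigma_A(x)$ is self-adjoint, so applying the functional calculus of Theorem \ref{fucntionalformula} to $T=A^*A$ with $G(\lambda)=e^{-\gamma\lambda}$ yields the operator-valued symbol identity $\sigma_{e^{-\gamma A^*A}}(x)=\exp(-\gamma\,\sigma_{A^*}(x)\sigma_A(x))$, and symmetrically $\sigma_{e^{-\gamma AA^*}}(x)=\exp(-\gamma\,\sigma_A(x)\sigma_{A^*}(x))$. Invoking the functional calculus here, rather than the power-series argument used in Theorem \ref{THEOREM!INDEX}, is exactly what makes the step valid for $m>0$, where $A$ is unbounded on $L^2(G)$. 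Passing from operator-valued to matrix-valued symbols through the identity $\sigma_B(x,\xi)=(\sigma_B(x)\xi)(e_G)$ recorded in the proof of Theorem \ref{elipticidadlie**} (Theorem 10.11.16 of \cite{Ruz}), I obtain
\[
\sigma_{e^{-\gamma A^*A}}(x,\xi)=(\exp(-\gamma\,\sigma_{A^*}(x)\sigma_A(x))\xi)(e_G),
\]
together with the analogous expression for $e^{-\gamma AA^*}$.

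Finally, since both heat operators have order $<-\dim(G)$, Theorem \ref{traceondiagonal} represents each trace as the absolutely convergent integral over $G$ of $\sum_{[\xi]\in\widehat{G}}d_\xi\,\textnormal{Tr}$ of the corresponding matrix-valued symbol. Substituting the symbols just computed and subtracting the two expansions term by term produces
\[
\textnormal{ind}(A)=\int_G\sum_{[\xi]\in\widehat{G}}d_\xi\,\textnormal{Tr}\big[(\exp(-\gamma\,\sigma_{A^*}(g)\sigma_A(g))\xi)(e_G)-(\exp(-\gamma\,\sigma_A(g)\sigma_{A^*}(g))\xi)(e_G)\big]\,dg,
\]
which is precisely the asserted formula once one reads off $(\mu_\gamma(g)\xi)(e_G)$.

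The main obstacle I anticipate is justifying the trace-class hypotheses so that Theorem \ref{traceondiagonal} genuinely applies, and checking the symbol identity in the unbounded regime. Concretely, one must confirm that $e^{-\gamma A^*A}$ and $e^{-\gamma AA^*}$ are smoothing (of order $<-\dim G$) when $m>0$, that $A^*A$ and its symbol $\sigma_{A^*}(x)\sigma_A(x)$ meet the self-adjointness and spectral hypotheses required by Theorem \ref{fucntionalformula}, and that the interchange of the $\widehat{G}$-summation with the $G$-integration (and with the subtraction of the two traces) is legitimate. The borderline case $m=0$, where the individual heat operators need no longer be trace class, should be handled by appealing directly to Theorem \ref{THEOREM!INDEX} and expanding its distributional density $\mu_\gamma(g)$ in the Fourier series on $\widehat{G}$.
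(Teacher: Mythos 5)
Your proposal follows essentially the same route as the paper's own proof: reduce to the McKean--Singer formula via Proposition \ref{closed} and Lemma \ref{heatapproach}, compute $\sigma_{e^{-\gamma A^*A}}(x)=\exp(-\gamma\,\sigma_{A^*}(x)\sigma_A(x))$ through the operator-valued functional calculus of Theorem \ref{fucntionalformula} (falling back on the power-series computation of Theorem \ref{THEOREM!INDEX} when $m=0$), pass to matrix-valued symbols by the identity $\sigma_B(x,\xi)=(\sigma_B(x)\xi)(e_G)$ from Theorem 10.11.16 of \cite{Ruz}, and conclude with the diagonal trace formula of Theorem \ref{traceondiagonal}. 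Even the caveats you flag (trace-class hypotheses and the borderline case $m=0$) coincide with how the paper itself organizes the argument, so the proposal is correct and matches the paper's proof.
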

\begin{proof}
By taking into account that $A$ has discrete spectrum, by Proposition \ref{closed} we can use Theorem \ref{heatapproach} in order to compute the index of $A.$ If $B_{\gamma}(x,\xi)$ is the matrix-valued symbol associated to  $e^{-\gamma A^*A}$ then

\begin{equation}
(\sigma_{e^{-\gamma {A^*}A}}(x)\xi)(e_G)=B_{\gamma}(x,\xi).
\end{equation} If $m>0,$ in order to compute $\sigma_{e^{-\gamma {A^*}A}}(x)$ we  will use the operator-valued functional calculus developed  above. Let us note that for $m=0,$ $A^*A$ is a bounded operator on $L^2(G)$ and the symbol $\sigma_{e^{-\gamma {A^*}A}}(x)$ has been computed in Theorem \ref{THEOREM!INDEX} by using the 
exponential formula $e^T=\sum_{k=0}^{\infty}\frac{1}{k!}T^{k}.$ If $m>0,$ then $A$  could be unbounded on $L^2$ and consequently we cannot use the previous exponential formula. So,  by Theorem \ref{fucntionalformula} applied to $F(t)=e^{-\gamma t}$ and $T=A^{*}A,$ we have the identity $F(A^*A)f(x)=[F(\sigma_{A^*A}(x))f](x),$ $f\in C^\infty(G),$ which in turn is equivalent to
\begin{equation}
    e^{-\gamma A^*A}f(x)=[e^{-\gamma\sigma_{A^*A}(x)}]f(x)=[e^{-\gamma\sigma_{A^*}(x)  \sigma_A(x)}]f(x),\,x\in G,
\end{equation}
where we have used the composition formula $\sigma_{BA}(x)=\sigma_{B}(x)  \sigma_A(x)$ applied to $B=A^*.$  Consequently, by the uniqueness of the operator-valued quantization,  we deduce that
\begin{equation}
\sigma_{e^{-\gamma \sigma_{A^*A } }}(x)=e^{-\gamma\sigma_{A^*}(x)  \sigma_A(x)}, \,\,\textnormal{and,}\,\,B_\gamma(x,\xi)=  (e^{-\gamma\sigma_{A^*}(x)  \sigma_A(x)}\xi)(e_G).
\end{equation}
Now, we have

\begin{equation}
\textnormal{tr}(e^{-\gamma A^*A})=\int_{G}\sum_{[\xi]\in \widehat{G}}d_\xi\textnormal{Tr}(B_\gamma(x,\xi))dx,
\end{equation} where we have used Theorem \ref{traceondiagonal}. So, we have
 \begin{equation}
\textnormal{tr}(e^{-\gamma A^*A})=\int_{G}\sum_{[\xi]\in \widehat{G}}d_\xi\textnormal{Tr}((e^{-\gamma \sigma_{A^*}(x)\sigma_A(x)}\xi)(e_G))dx.
\end{equation} A similar analysis gives 
\begin{equation}
\textnormal{tr}(e^{-\gamma AA^*})=\int_{G}\sum_{[\xi]\in \widehat{G}}d_\xi\textnormal{Tr}((e^{-\gamma \sigma_A(x)  \sigma_{A^*}(x)}\xi)(e_G))dx.
\end{equation} Thus, we obtain
\begin{equation}
\textnormal{ind}(A)=\int_{G}\sum_{[\xi]\in \widehat{G}}d_\xi\textnormal{Tr}((e^{-\gamma \sigma_{A^*}(x)\sigma_A(x)}\xi)(e_G)  - (e^{-\gamma \sigma_A(x)  \sigma_{A^*}(x)}\xi)(e_G) )dx.
\end{equation} With the last line we finish the proof.
\end{proof}
\begin{remark}[Index of matrix-valued pseudo-differential operators] Let us note that  Theorem \ref{heatapproach} implies the following formula for the index of $A.$  
\begin{align}
\text{ind}(A)&=\text{tr}(e^{-\gamma A^{*} A})-\text{tr}(e^{-\gamma AA^{*} }).
\end{align}
We now observe by applying Theorem \ref{traceondiagonal} that
\begin{align*}
\textnormal{ind}(A)&=\text{tr}(e^{-\gamma A^{*} A})-\text{tr}(e^{-\gamma AA^{*} })\\
&=\int_{G}\sum_{ [\xi]\in \hat{G} }d_{\xi}\text{tr}[\sigma_{e^{-\gamma A^{*}A } } (x,\xi)]dx-\int_{G}\sum_{ [\xi]\in \hat{G} }d_{\xi}\text{Tr}[\sigma_{e^{-\gamma AA^{*} } } (x,\xi)]dx\\
&=\int_{G}\sum_{ [\xi]\in \hat{G} }d_{\xi}\text{Tr}[ i_\gamma(x,\xi)  ]dx,
\end{align*} where $i_\gamma(x,\xi):=\sigma_{ [e^{-\gamma A^{*}A }-e^{-\gamma AA^{*} } ]} (x,\xi).$  
\end{remark}

\section{The index of operators on  $\mathbb{T}^n,$  $\textnormal{SU}(2)$ and $\textnormal{SU}(3).$}\label{examples} In this section we consider our index formulae on the $n$-dimensional torus $\mathbb{T}^n$, and the special unitary groups $\textnormal{SU}(2)$ and $\textnormal{SU}(3).$ Our starting point is the following  example in the commutative setting. Our main goal is to  provide explicit computations for the integral expression in Theorem \ref{Indexformulagamma}. 

\begin{example}$(\textnormal{The torus}).$ Let us consider the $n$-dimensional torus $G=\mathbb{T}^n:=\mathbb{R}^n/\mathbb{Z}^n$  and its unitary dual $\widehat{\mathbb{T}}^n:=\{e_\ell:\ell\in\mathbb{Z}^n\},$ $e_\ell(x):=e^{i2\pi\ell\cdot x},$ $x\in\mathbb{T}^n.$ Let us assume that $A\in \Psi^{m}(\mathbb{T}^n),$ $m\geq 0,$ is an elliptic operator.   
Under the identification $\mathbb{T}^n\cong [0,1)^n$, by Theorem \ref{Indexformulagamma}   the index of $A$ is given by
\begin{equation}
\textnormal{ind}(A)=\int\limits_{[0,1)^n}\sum_{\ell\in \mathbb{Z}^n}(\mu_\gamma(x)e^{2\pi i\ell x})(1)\,dx,
\end{equation} where
$$
(\mu_\gamma(x)e^{i2\pi x\ell})(1)=(\exp(-\gamma \sigma_{A^*}(x)\sigma_{A}(1))e^{2\pi i\ell x})(1)-(\exp(-\gamma \sigma_{A}(x) \sigma_{A^*}(x) )e^{2\pi i\ell x})(1),
$$ for all $\gamma>0.$ Other properties on the quantization of global pseudo-differential operators on the torus can be found in \cite{RuzTur2010}.
\end{example}

Now, we present an index formula for elliptic operators on $\textnormal{SU}(2).$ The matrix-valued calculus of pseudo-differential operators for $\textnormal{SU}(2)$ can be found in \cite{RuzTur2013}.

\begin{example}$(\textnormal{The group SU(2)}).$ Let us consider the group $\textnormal{SU}(2)\cong \mathbb{S}^3$ consisting of those orthogonal matrices $A$ in $\mathbb{C}^{2\times 2},$ with $\det(A)=1$.   We recall that the unitary dual of $\textnormal{SU}(2)$ (see \cite{Ruz}) can be identified as
\begin{equation}
\widehat{\textnormal{SU}}(2)\equiv \{ [t_{l}]:2l\in \mathbb{N}, d_{l}:=\dim t_{l}=(2l+1)\}.
\end{equation}
There are explicit formulae for the representations $t_{l}$ as
functions of Euler angles in terms of the so-called Legendre-Jacobi polynomials, see \cite{Ruz}.
Let us consider $m\geq 0$ and let $A\in \Psi^m(\textnormal{SU}(2))$ be an elliptic operator. Then the index of $A$ is given by
\begin{equation}
\textnormal{ind}(A)=\int\limits_{\textnormal{SU}(2)}\sum_{l\in \frac{1}{2}\mathbb{N}}(2l+1)\textnormal{Tr}((\mu_\gamma(g)t_l(g))(I_2))\,dg,
\end{equation}
where $I_2\in \mathbb{C}^{2\times 2}$ is the identity matrix and
\begin{equation}
(\mu_\gamma(g)t_l(g))(I_2)=(\exp(-\gamma \sigma_{A^*}(g)\sigma_{A}(g))t_l(g))(I_2)-(\exp(-\gamma \sigma_{A}(g) \sigma_{A^*}(g) )t_l(g))(I_2),
\end{equation} for all $\gamma>0.$ By using the diffeomorphism 
$\varrho:\textnormal{SU}(2)\rightarrow \mathbb{S}^3,$ defined by
\begin{equation}\varrho(g)=x:=(x_1,x_2,x_3,x_4),\,\,\,\textnormal{for}\,\,\,\,
g=\begin{bmatrix}
    x_1+ix_2       & x_3+ix_4  \\
    -x_3+ix_4       & x_1-ix_2 
\end{bmatrix}, 
\end{equation} we have
\begin{equation}
    \textnormal{ind}(A)=\int\limits_{\mathbb{S}^3}\sum_{l\in \frac{1}{2}\mathbb{N}}(2l+1)\textnormal{Tr}((\mu_\gamma(\varrho^{-1}(x))t_l(\varrho^{-1}(x)))(I_2))\,d\sigma(x),
\end{equation} where $d\sigma(x)$ is the surface measure on $\mathbb{S}^3.$ Now, we want to compute explicitly  the last integral. If we consider the usual parametrization of $\mathbb{S}^3$ defined by $x_1:=\cos(\frac{t}{2}),$ $x_2:=\nu,$ $x_3:=(\sin^2(\frac{t}{2})-\nu^2)^{\frac{1}{2}}\cos(s),$ $x_4:=(\sin^2(\frac{t}{2})-\nu^2)^{\frac{1}{2}}\sin(s),$ where
$$(t,\nu,s)\in D:=\{(t,\nu,s)\in\mathbb{R}^3:|\nu|\leq \sin(\frac{t}{2}),\,0\leq t,s\leq 2\pi\},$$
then $d\sigma(x)=\sin(\frac{t}{2})dtd\nu ds,$ and we have the following index formula in terms of iterated integrals,
\begin{align*}
    &\textnormal{ind}(A) \\&= \int\limits_{0}^{2\pi}\int\limits_{0}^{2\pi}\int\limits_{  -\sin(t/2) }^{ \sin(t/2)   }  \sum_{l\in \frac{1}{2}\mathbb{N}}(2l+1)\textnormal{Tr}((\mu_\gamma(\varrho^{-1}(t,\nu,s))t_l(\varrho^{-1}(t,\nu,s)))(I_2))\,\sin(\frac{t}{2})d\nu dt ds.
\end{align*}\end{example}
We end this section with the following example in the non-commutative context.
\begin{example}
The Lie group $\textnormal{SU}(3)$ (see Fegan \cite{Fe1}) has dimension 8 and $3$ positive square roots $\alpha,\beta$ and $\rho$ with the property
\begin{equation}
\rho=\frac{1}{2}(\alpha+\beta+\rho). 
\end{equation}
We define the weights
\begin{equation}
\sigma=\frac{2}{2}\alpha+\frac{1}{3}\beta,\,\,\,\tau=\frac{1}{3}\alpha+\frac{2}{3}\beta.
\end{equation}
With the notations above the unitary dual of $\textnormal{SU}(3)$ can be identified with
\begin{equation}
\widehat{\textnormal{SU}}(3)\cong\{\lambda:=\lambda(a,b)=a\sigma+b\tau:a,b\in\mathbb{N}_{0}, \}.
\end{equation}
In fact, every representation $\pi=\pi_{\lambda(a,b)}$ has highest weight $\lambda=\lambda(a,b)$ for some $(a,b)\in\mathbb{N}_0^2.$ In this case $d_{{\lambda(a,b)}}:=d_{\pi_{\lambda(a,b)}}=\frac{1}{2}(a+1)(b+1)(a+b+2).$ 
If we choose an elliptic operator $A\in \Psi^m(\textnormal{SU}(3))$ with $m\geq 0,$ then by using Theorem \ref{Indexformulagamma}, we can to compute the index of $A$ by the formula \begin{equation}
\textnormal{ind}(A)=\int\limits_{ \mathbb{SU}(3) }\sum_{a,b\in\mathbb{N}_0}\frac{1}{2}(a+1)(b+1)(a+b+2)\textnormal{Tr}((\mu_\gamma(g)\pi_{a\sigma+b\tau })(I_3))\,dg,
\end{equation}
where $I_3\in \mathbb{C}^{3\times 3}$ is the identity matrix and
\begin{align*}
&(\mu_\gamma(g)\pi_{a\sigma+b\tau })(I_3)\\
&=(\exp(-\gamma \sigma_{A^*}(g)\sigma_{A}(g))\pi_{a\sigma+b\tau })(I_3)-(\exp(-\gamma \sigma_{A}(g) \sigma_{A^*}(g) )\pi_{a\sigma+b\tau })(I_3),
\end{align*} for all $\gamma>0.$ Now, as in the previous example, we compute the integral explicitly. If we consider the  parametrization of $\textnormal{SU}(3)$ (see, e.g., Bronzan \cite{SU3}), $$g\equiv g(\theta,\phi)= g(\theta_1,\theta_2,\theta_3,\phi_1,\phi_2,\phi_3,\phi_4,\phi_5):= (u_{ij})_{i,j=1,2,3}, $$ where $0\leq \theta_i\leq \frac{\pi}{2},\,0\leq \phi_i\leq 2\pi,$ and 
\begin{itemize}
\item $u_{11}=\cos\theta_1 \cos\theta_2 e^{i\phi_1}$
\item $ u_{12}=\sin\theta_1 e^{i\phi_3} $
\item $ u_{13} =\cos\theta_1\sin\theta_2 e^{i\phi_4} $
\item $u_{21} =\sin\theta_1 \sin\theta_3 e^{-i\phi_4-i\phi_5} -\sin\theta_1 \cos\theta_2 \cos\theta_3 e^{i\phi_1+i\phi_2-i\phi_3} $
\item $u_{22} =\cos\theta_1 \cos\theta_3 e^{i\phi_2}   $
\item $u_{23}=-\cos\theta_1\sin\theta_3e^{-i\phi_1-i\phi_5}-\sin\theta_1\sin\theta_2 \cos\theta_3 e^{ i\phi_2-i\phi_3+i\phi_4 }$
\item $u_{31}= -\sin\theta_1\cos\theta_2\sin\theta_3 e^{i\phi_1-i\phi_3+i\phi_5} -\sin\theta_2\cos\theta_3e^{-i\phi_2-i\phi_4}  $

\item $u_{32}=\cos\theta_1\sin\theta_3e^{i\phi_5}$
\item $u_{33} =\cos\theta_2\cos\theta_3 e^{-i\phi_1-i\phi_2}  -\sin\theta_1\sin\theta_2\sin\theta_3e^{-i\phi_3+i\phi_4+i\phi_5}, $
\end{itemize}

then, the group measure is the determinant  given by
\begin{equation}
dg=\frac{1}{2\pi^5}\sin\theta_1 \cos^3\theta_1\sin\theta_2\cos\theta_2\sin\theta_3 \cos\theta_3d\theta_1 d\theta_2 d\theta_3 d\phi_1 d\phi_2 d\phi_3 d\phi_4 d\phi_5,
\end{equation} and we have
 \small{
\begin{align*}
{\textnormal{ind}(A)}
=\frac{1}{4\pi^5}\int\limits_{[0,\frac{\pi}{2}]^3}\int\limits_{[0,2\pi]^5}\sum_{ a,b\in\mathbb{N}_{0}  }(a+b+ab+1)(a+b+2) 
\textnormal{Tr}((\mu_\gamma(g(\theta,\phi))\pi_{a\sigma+b\tau })(I_3))\\
\times   \sin\theta_1 \cos^3\theta_1\sin\theta_2\cos\theta_2\sin\theta_3 \cos\theta_3d\theta\,d\phi
,\end{align*}} with  $(\theta,\phi)=(\theta_1,\theta_2,\theta_3,\phi_1,\phi_2,\phi_3,\phi_4,\phi_5),$ $d\theta=d\theta_1 d\theta_2 d\theta_3,$ and $d\phi=d\phi_1 d\phi_2 d\phi_3 d\phi_4 d\phi_5.$
\end{example}

{ {\bf{ Acknowledgements.}} I would like to thank   Alexander Cardona and Michael Ruzhansky for various discussions.} I also would like to thanks Julio Delgado who has suggested me a gap in the proof of Proposition \ref{traceondiagonal} in a previous version of this document.
The author is indebted with the referee of this paper by his/her asserted suggestions which helped to improve the manuscript.

\bibliographystyle{amsplain}

\end{document}